\theoremstyle{remark} 
\newtheorem*{cproof}{Proof of Claim}
\newcommand{\claimqed}{\hfill$\blacksquare$}
\newcommand{\ran}{\mathrm{ran}}
\newcommand{\ZFC}{\mathrm{ZFC}}
\newcommand{\GCH}{\mathrm{GCH}}
\newcommand{\cf}[1]{\mathrm{cf}\left({#1}\right)}
\newcommand{\lkakko}{\left\lbrace}
\newcommand{\rkakko}{\right\rbrace}
\newcommand{\Ord}{\mathrm{Ord}}
\newcommand{\imageof}{{}^{\,{\prime}{\prime}}}
\newcommand{\denincl}[1]{\mathrm{den}\left({#1}, \subseteq \right)}
\newcommand{\Stat}{\mathsf{stat}}
\newcommand{\club}{\mathsf{club}}
\newcommand{\otp}[1]{\mathrm{otp}\left({#1}\right)}
\newcommand{\Fn}{\mathrm{Fn}}
\newcommand{\pposet}{\mathbb{P}}
\newcommand{\ncol}[1]{\mathsf{Col}({#1})}
\newcommand{\nchr}[1]{\mathsf{Chr}({#1})}
\newcommand{\nlist}[1]{\mathsf{List}({#1})}
\newcommand{\nrlist}[1]{\mathsf{List}^{*}({#1})}
\newcommand{\col}[2]{\mathsf{Col}({#1}, {#2})}
\newcommand{\lst}[2]{\mathsf{List}({#1}, {#2})}
\newcommand{\rlist}[2]{\mathsf{RList}({#1}, {#2})}
\newcommand{\slist}[2]{\mathsf{SList}({#1}, {#2})}
\newcommand{\chr}[2]{\mathsf{Chr}({#1}, {#2})}
\theoremstyle{definition}
\newtheorem{defi}{Definition}[section]
\theoremstyle{plain}
\newtheorem{prop}[defi]{Proposition}
\newtheorem{theo}[defi]{Theorem}
\newtheorem{cor}[defi]{Corollary}
\newtheorem{lem}[defi]{Lemma}
\newtheorem{fac}[defi]{Fact}
\newtheorem*{cla}{Claim}
\newtheorem{que}{Question}
\crefname{prop}{Proposition}{Propositions}
\crefname{theo}{Theorem}{Theorems}
\crefname{cor}{Corollary}{Corollaries}
\crefname{lem}{Lemma}{Lemmata}
\crefname{fac}{Fact}{Facts}
\crefname{prob}{Problem}{Problems}
\crefname{que}{Question}{Questions}
\title{Stationary list colorings}
\author{Yusuke Hayashi\thanks{Supported by JST SPRING, Japan Grant Number JPMJSP2148}}
\affil{Graduate School of System Informatics, Kobe University, 1-1 Rokkodai, Nada-ku, 657-8501 Kobe, Japan. E-mail: 219x504x@stu.kobe-u.ac.jp}
\date{\today}
\begin{document}
\maketitle

\begin{abstract}
    Komj\`{a}th \cite{komjath2013list} studied the list chromatic number of infinite graphs and introduced the notion of the restricted list chromatic number. For a graph $X = (V_X, E_X)$ and a cardinal $\kappa$, we say that $X$ is restricted list colorable for $\kappa$ ($\rlist{X}{\kappa}$) if for any $L:V_X \rightarrow [\kappa]^\kappa$, there is some choice function $c$ of $L$ such that $c(v) \neq c(w)$ whenever $\lkakko v, w \rkakko \in E_X$. In this paper, we discuss its variation, the stationary list colorability for $\kappa$ ($\slist{X}{\kappa}$), which is obtained by replacing $[\kappa]^\kappa$ with the set of all stationary subsets of $\kappa$. 
    
    We compare the stationary list colorability with other coloring properties. Among other things, we prove that the stationary list colorability is essentially different from other coloring properties including the restricted list colorability. We also prove the consistency of that $\rlist{X}{\kappa}$ and $\slist{X}{\kappa}$ do not imply $\rlist{X}{\lambda}$ and $\slist{X}{\lambda}$ for $\lambda > \kappa$.
\end{abstract}

\section{Introduction}
Graph coloring is one of the central topics in combinatorics. Finite graph coloring has been studied extensively, and several invariants have been investigated, including the chromatic number, the coloring number, and the list chromatic number.

On the other hand, many natural extensions have been considered for infinite graphs as well. In particular, Erd\"{o}s and Hajnal \cite{erdHos1966chromatic} studied the chromatic and the coloring numbers of infinite graphs widely. Let us recall those definitions. Let $X=(V_X, E_X)$ be an undirected simple graph, i.e., $V_X$ is a set and $E_X \subseteq [V_X]^2$. A function $c$ whose domain is $V_X$ is \textit{good} if, for all $v, w \in V_X$, $c(v) \neq c(w)$ holds whenever $\lkakko v, w \rkakko \in E_X$.

\begin{itemize}
    \item \textit{The coloring number $\ncol{X}$} is the least cardinal $\kappa$ such that there is a well-order $\prec$ of $V_X$ such that, for all $v \in V_X$, $\lkakko w \in V_X \mid w \prec v  \land \lkakko v, w \rkakko \in E_X \rkakko$ has size ${<} \kappa$. 
    \item \textit{The chromartic number $\nchr{X}$} is the least cardinal $\kappa$ such that there is a good coloring $c:V_X \rightarrow \kappa$.
\end{itemize}

Later, Komj\'{a}th \cite{komjath2013list} studied the list chromatic number of infinite graphs and introduced the notion of the restricted list chromatic number:

\begin{itemize}
    \item \textit{The list chromatic number $\nlist{X}$} is the least cardinal $\kappa$ such that, for every function $L:V_X \rightarrow [\Ord]^\kappa$, there is some choice function $c$ of $L$ such that $c$ is good.
    \item \textit{The restricted list chromatic number $\nrlist{X}$} is the least cardinal $\kappa$ such that, for every function $L:V_X \rightarrow [\kappa]^\kappa$, there is some choice function $c$ of $L$ such that $c$ is good. 
\end{itemize}

Komj\'{a}th \cite{komjath2013list} investigated these notions and showed the consistency of $\nlist{X} = \ncol{X}$ for every infinite graph $X$ with $\ncol{X} \geq \aleph_0$ and that of $\nlist{X} = \aleph_0$ iff $\nchr{X} = \aleph_0$ for every infinite graph $X$ with $|V_X| = \aleph_1$. In this paper, we focus on the following result due to Komj\`{a}th \cite{komjath2013list}:

\begin{fac}\label{fac:Komjath fact}
    Assume $\GCH$. Suppose that $\nlist{X}$ is infinite. Then $\ncol{X} \leq \nlist{X}^{+}$. Also, if $\nrlist{X}$ is regular, then $\ncol{X} \leq \nrlist{X}^{+}$.
\end{fac}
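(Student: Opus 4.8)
The plan is to prove the contrapositive of each clause, reducing both to one construction. Write $\kappa$ for the relevant cardinal — $\nlist{X}$ in the first clause, $\nrlist{X}$ (which I will take to be an infinite regular cardinal) in the second — and suppose towards a contradiction that $\ncol{X}\geq\kappa^{++}$, i.e.\ $\ncol{X}>\kappa^{+}$. Since any list assignment into $[\kappa]^{\kappa}$ is in particular one into $[\Ord]^{\kappa}$, it suffices in both clauses to manufacture, from the single hypothesis $\ncol{X}>\kappa^{+}$, a list assignment with no good choice function: an $L\colon V_X\to[\Ord]^{\kappa}$ in the first case and an $L\colon V_X\to[\kappa]^{\kappa}$ in the second (this last demand is where regularity of $\kappa$ will be used). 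Either one contradicts the defining property of $\nlist{X}$, resp.\ $\nrlist{X}$. So the task reduces to: large coloring number forces a bad list of the appropriate kind.

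First I would extract the combinatorial obstruction hidden in $\ncol{X}>\kappa^{+}$. Since $\ncol{X}\leq\kappa^{+}$ means exactly that $V_X$ carries a well-ordering in which every vertex has at most $\kappa$ earlier neighbours, and such an ordering can be assembled greedily by repeatedly appending a vertex with at most $\kappa$ already-listed neighbours, the hypothesis $\ncol{X}>\kappa^{+}$ forces the greedy construction to jam: there is a proper $S\subsetneq V_X$ carrying a well-ordering in which every vertex has $\leq\kappa$ earlier neighbours, while every $v\in V_X\setminus S$ has at least $\kappa^{+}$ neighbours in $S$. Concatenating the enumeration of $S$ with an enumeration of $V_X\setminus S$ (after the routine reduction to a counterexample of size $\kappa^{++}$, which uses $\GCH$; alternatively one runs the same argument along an $\in$-chain of elementary submodels of size $\kappa^{+}$), one obtains an enumeration $\langle v_\alpha : \alpha<\kappa^{++}\rangle$ of $V_X$ together with a stationary $T\subseteq\kappa^{++}$ such that each $v_\alpha$ with $\alpha\in T$ has a set $S_\alpha$ of exactly $\kappa^{+}$ neighbours with $S_\alpha\subseteq\{v_\beta : \beta<\alpha\}$. (Conversely, Fodor's lemma applied to any witness of $\ncol{X}\leq\kappa^{+}$ shows that such a configuration already forces $\ncol{X}>\kappa^{+}$, so this is exactly the right obstruction; a further pressing-down thinning of $T$ makes the family $\langle S_\alpha\rangle$ coherent enough for the next step.)

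Next I would construct the bad list by recursion along $\langle v_\alpha : \alpha<\kappa^{++}\rangle$, the bookkeeping powered by $\GCH$ (chiefly $2^{\kappa}=(\kappa^{+})^{\kappa}=\kappa^{+}$ and $2^{\kappa^{+}}=\kappa^{++}$). Vertices $v_\alpha$ with $\alpha\notin T$ receive ``transparent'' lists of fresh, pairwise disjoint ordinals, so they neither carry nor block a monochromatic edge. For $\alpha\in T$ the list $L(v_\alpha)$ is a carefully chosen $\kappa$-element set of ordinals arranged so that, in any choice function $c$ with no monochromatic edge, the value $c(v_\alpha)$ is forced to ``name'' a neighbour in $S_\alpha$ whose $c$-value is strictly smaller; iterating this down the $S_\alpha$'s then produces an infinite strictly decreasing sequence of ordinals, which is impossible. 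Each recursion step only needs to inspect the $\leq\kappa^{+}$ lists already assigned to the members of $S_\alpha$, and $\GCH$ guarantees there remain enough unused ordinals to make the choice at stage $\alpha$. For the restricted clause one runs the very same recursion but confined to colours below $\kappa$; regularity of $\kappa$ is precisely what makes all the $\kappa$-sized colour pools and the length-$\kappa$ diagonalisations fit inside $\kappa$.

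The crux — and the step I expect to resist a quick argument — is this last construction: one must simultaneously keep every $L(v_\alpha)$ of size exactly $\kappa$ (and, in the restricted case, inside $\kappa$) and still secure the global property that no choice function is good, threading the descending-ordinal mechanism through the merely stationarily many, only loosely coherent, sets $S_\alpha$. The restricted variant is genuinely harder than the unrestricted one, since colours can no longer be spent freely, and that is exactly where regularity of $\nrlist{X}$ becomes indispensable.
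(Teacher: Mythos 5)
Your overall plan---prove the contrapositive, extract a structural obstruction from $\ncol{X}>\kappa^{+}$, and then diagonalize a bad list against it using $\GCH$---is the right shape and matches the route the paper attributes to Komj\'ath (the combination of \Cref{fac:characterization} and \Cref{fac:type1 implies lst and rlst fails}). But both halves of your argument have genuine gaps.

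First, the ``routine reduction to a counterexample of size $\kappa^{++}$'' is not routine and is false in general: the coloring number is incompact at regular cardinals, so $\ncol{X}>\kappa^{+}$ does not yield a subgraph of size $\kappa^{++}$ with the same property, and an $\in$-chain of $\kappa^{+}$-sized submodels of length $\kappa^{++}$ only sees $\kappa^{++}$ many vertices. The correct structural consequence of the failure of $\col{X}{\kappa^{+}}$ (\Cref{fac:characterization}) is a bipartite subgraph $(A,B)$ with $|A|=\lambda$, $|B|=\lambda^{+}$ for \emph{some} $\lambda\geq\kappa$ and $|N(b)|\geq\kappa$ for all $b\in B$; here $\lambda$ can be much larger than $\kappa^{+}$, and no stationary set in $\kappa^{++}$ appears. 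Your configuration (stationarily many $\alpha<\kappa^{++}$ with $\kappa^{+}$ earlier neighbours) is closer to the ``type 2'' obstruction that is relevant for $\col{X}{\kappa}$ rather than $\col{X}{\kappa^{+}}$, and you have not derived it from the hypothesis.

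Second, and more seriously, the construction of the bad list---the only step that actually proves anything---is not carried out; you describe the intended mechanism (``forced to name a neighbour with strictly smaller $c$-value, hence an infinite descending sequence of ordinals'') and then concede it is the step you expect to resist argument. The actual mechanism is a one-shot diagonalization, not a descending-chain argument: under $\GCH$ one has $\left|\prod_{a\in A}L(a)\right|=\kappa^{\lambda}=\lambda^{+}=|B|$, so one identifies $B$ with the set of all candidate restrictions $g=c\upharpoonright A$ and sets $L(b)=\lkakko g(a)\mid a\in N(b)\rkakko$ for the $b$ coding $g$; then any choice function $c$ satisfies $c(b)=g(a)=c(a)$ for some neighbour $a$ of $b$, giving a monochromatic edge. (In the restricted case one must additionally arrange that each such $L(b)$ has size $\kappa$ and lies in $\kappa$, which is where regularity enters.) This is exactly the argument written out in the paper's proof of \Cref{lem:case1}, adapted there to stationary lists. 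As it stands, your proposal identifies the right difficulties but does not resolve either of them.
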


Motivated by these works, we revisit and extend his approach by introducing a new variant, called the stationary list coloring, where each vertex is assigned a stationary subset of a regular cardinal $\kappa$ as its list of colors.

In this paper, we study five coloring notions:
\begin{defi}
    Let $X=(V_X, E_X)$ be a graph and $\kappa$ be a cardinal.
    \begin{itemize}
        \item $\col{X}{\kappa}$ holds if there is a well-order $\prec$ of $V_X$ such that,for all $v \in V_X$, $\{ w \in V_X \mid w \prec v  \land \lkakko v, w \rkakko \in E_X \}$ has size ${<} \kappa$.
        \item $\lst{X}{\kappa}$ holds if for each $L: V_X \rightarrow [\Ord]^\kappa$ there is a choice function $c$ of $L$ such that $c$ is a good coloring.
        \item $\rlist{X}{\kappa}$ holds if for each $L: V_X \rightarrow [\kappa]^\kappa$ there is a choice function $c$ of $L$ such that $c$ is a good coloring.
        \item Suppose $\kappa$ is a regular cardinal. Let $\Stat_\kappa$ be the set of all stationary subsets of $\kappa$. $\slist{X}{\kappa}$ holds if for each $L: V_X \rightarrow \Stat_\kappa$ there is a choice function $c$ of $L$ such that $c$ is a good coloring.
        \item $\chr{X}{\kappa}$ holds if there is a good coloring $c:V_X \rightarrow \kappa$.
    \end{itemize}
\end{defi}

For regular cardinals $\kappa < \lambda$, we can summarize the relationships among these properties as shown in Figure 1. See \cite{komjath2013list} for $\col{X}{\kappa} \rightarrow \lst{X}{\kappa}$. $\lst{X}{\kappa} \rightarrow \rlist{X}{\kappa} \rightarrow \slist{X}{\kappa}$ since $\Stat_\kappa \subseteq [\kappa^\kappa] \subseteq [\Ord]^\kappa$. The other implications are immediate from the definitions.

\begin{figure}[H]
    \centering
    \caption{Diagram of variants of coloring properties}
    \label{fig:diagram of variants}
    \begin{tikzpicture}
            \node (chr1) at (2, 4) {$\chr{X}{\kappa}$};
            \node (chr0) at (2, 2) {$\chr{X}{\lambda}$};
            
            \node (sl1) at (-1, 4) {$\slist{X}{\kappa}$};
            \node (sl0) at (-1, 2) {$\slist{X}{\lambda}$};
            
            \node (rl1) at (-4, 4) {$\rlist{X}{\kappa}$};
            \node (rl0) at (-4, 2) {$\rlist{X}{\lambda}$};
            
            \node (l1) at (-7, 4) {$\lst{X}{\kappa}$};
            \node (l0) at (-7, 2) {$\lst{X}{\lambda}$};
            
            \node (col1) at (-10, 4) {$\col{X}{\kappa}$};
            \node (col0) at (-10, 2) {$\col{X}{\lambda}$};

            \draw[thick, ->] (col1) -- (l1);
            \draw[thick, ->] (l1) -- (rl1);
            \draw[thick, ->] (rl1) -- (sl1);
            \draw[thick, ->] (sl1) -- (chr1);
            
            \draw[thick, ->] (col0) -- (l0);
            \draw[thick, ->] (l0) -- (rl0);
            \draw[thick, ->] (rl0) -- (sl0);
            \draw[thick, ->] (sl0) -- (chr0);
            
            \draw[thick, ->] (col1) -- (col0);
            
            \draw[thick, ->] (l1) -- (l0);
            
            \draw[thick, ->] (chr1) -- (chr0);  
        \end{tikzpicture}
\end{figure}
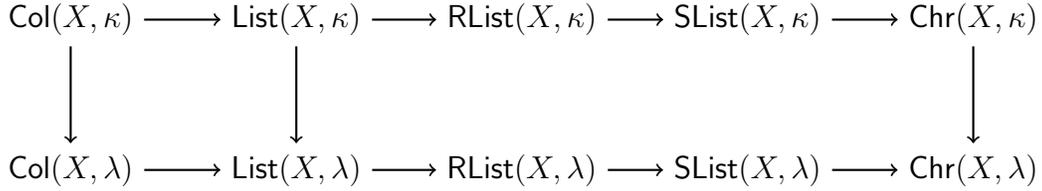

Then the following questions arise naturally:
\begin{enumerate}
    \item Is $\slist{X}{\kappa}$ essentially different from $\rlist{X}{\kappa}$ and $\chr{X}{\kappa}$?
    \item Can we prove an analogue of \Cref{fac:Komjath fact} for $\slist{X}{\kappa}$?
    \item Are $\rlist{X}{-}$ and $\slist{X}{-}$ monotonic? That is, if $\kappa < \lambda$, then does $\rlist{X}{\kappa}$ (respectively, $\slist{X}{\kappa}$) imply $\rlist{X}{\lambda}$ (respectively, $\slist{X}{\lambda}$)?
\end{enumerate}

In this paper, we answer these questions. 
\begin{defi}
    Let $\kappa$ and $\lambda$ be cardinals. The complete bipartite graph $K_{\kappa, \lambda}$ is defined as follows:
    \begin{itemize}
        \item $V_{K_{\kappa, \lambda}} = A\sqcup B$ where $A$ and $B$ are sets with $|A|=\kappa$ and $|B|=\lambda$.
        \item $E_{K_{\kappa, \lambda}} = \lkakko \{a, b\} \mid a \in A, b\in B \rkakko$.
    \end{itemize}
\end{defi}

Here, we can answer the questions above as follows:
\begin{theo}[see \Cref{theo:chr neq slist,theo:slist neq rlist,theo:main3,theo:main4}]
    Let $\kappa$ and $\lambda$ be regular cardinals.
    \begin{enumerate}
        \item We have the following:
        \begin{itemize}
            \item $\chr{K_{2^\kappa, 2^\kappa}}{\kappa}$ holds but $\slist{K_{2^\kappa, 2^\kappa}}{\kappa}$ fails.
            \item $\slist{K_{2^\kappa, \kappa}}{\kappa}$ holds but $\rlist{K_{2^\kappa, \kappa}}{\kappa}$ fails.
        \end{itemize}
        \item Assume $\GCH$. If $\slist{X}{\kappa}$ holds, then
        \begin{itemize}
            \item $\col{X}{\kappa^{++}}$ holds;
            \item moreover, if $\lambda \in I[\lambda]$ for all regular cardinals $\lambda$, then $\col{X}{\kappa^+}$ holds.
        \end{itemize}
        \item Assume $\GCH$. Let $\kappa < \lambda$ be regular cardinals and $\mu> \lambda$ be a singular cardinal with cofinality $\kappa$. Define $\pposet = \Fn_{<\kappa}(\mu, 2)$. Let $G$ be a $\pposet$-generic filter over $V$. Then in $V[G]$, 
            \begin{itemize}
                \item $\rlist{K_{\mu, \mu}}{\kappa}$ holds but $\rlist{K_{\mu, \mu}}{\lambda}$ fails, and
                \item $\slist{K_{\mu, \mu}}{\kappa}$ holds but $\slist{K_{\mu, \mu}}{\lambda}$ fails.
            \end{itemize}
        In other words, in $V[G]$, the monotonicities of $\mathsf{RList}$ and $\mathsf{SList}$ fail simultaneously.
    \end{enumerate}
\end{theo}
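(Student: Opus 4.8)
The graph $K_{\mu,\mu}$ is trivial enough that all four assertions reduce to statements about ``partitioning the colour set''. For a list function $L\colon A\sqcup B\to\Power(\theta)$ on $K_{\mu,\mu}$, a choice function is good iff its range on $A$ is disjoint from its range on $B$; so a good choice function exists iff there is $S\subseteq\theta$ with $L(a)\cap S\neq\emptyset$ for all $a\in A$ and $L(b)\setminus S\neq\emptyset$ for all $b\in B$. Writing $\mathcal A=\{L(a):a\in A\}$, $\mathcal B=\{L(b):b\in B\}$ (each of size $\le\mu$), this holds as soon as some single $S$ ``weakly splits'' every member of $\mathcal A\cup\mathcal B$, i.e.\ meets it and meets its complement. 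Hence: $\rlist{K_{\mu,\mu}}{\kappa}$ holds iff every $\le\mu$‑sized family in $[\kappa]^\kappa$ is weakly split by one set, equivalently (by the standard ``$\{F\setminus\beta:\beta<\kappa\}$'' trick, which converts weak splitting into genuine splitting) iff $\mathfrak r_\kappa>\mu$; and $\slist{K_{\mu,\mu}}{\lambda}$ fails iff there is a family of $\le\mu$ stationary subsets of $\lambda$ that is weakly unsplittable, i.e.\ every $S\subseteq\lambda$ has a member of the family inside $S$ or inside $\lambda\setminus S$. Since in Figure~\ref{fig:diagram of variants} we have $\rlist{X}{\kappa}\to\slist{X}{\kappa}$ and $\rlist{X}{\lambda}\to\slist{X}{\lambda}$, it suffices to prove the two extreme assertions: $\rlist{K_{\mu,\mu}}{\kappa}$ holds and $\slist{K_{\mu,\mu}}{\lambda}$ fails; the other two then follow.

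First I record the basic facts about $V[G]$. Since ground $\GCH$ holds and $\cf{\mu}=\kappa$, we have $|\pposet|=\mu^{<\kappa}=\mu$; $\pposet$ is ${<}\kappa$‑closed and, using $2^{<\kappa}=\kappa$, $\kappa^+$‑c.c., so it preserves all cardinals and cofinalities and adds no new subsets of ordinals $<\kappa$. A nice‑name count gives $2^\theta\le\mu^+$ for $\theta\in[\kappa,\mu]$, while $2^\kappa\ge\mu$ and $\cf{2^\kappa}>\kappa=\cf{\mu}$ force $2^\kappa\ge\mu^+$; hence $2^\theta=\mu^+$ for every $\theta\in[\kappa,\mu]$, in particular $2^\kappa=2^\lambda=\mu^+$. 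Finally, as $\pposet$ is $\lambda$‑c.c.\ (antichains have size $\le\kappa<\lambda$), every club of $\lambda$ in $V[G]$ contains a club of $\lambda$ in $V$, so $NS_\lambda^V$ is cofinal in $NS_\lambda^{V[G]}$ and $\cof{NS_\lambda}^{V[G]}\le(\lambda^+)^V\le\mu$.

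\textbf{The positive side at $\kappa$.} Identify $\pposet$ with $\Fn_{<\kappa}(\mu\times\kappa,2)\cong\mathrm{Add}(\kappa,\mu)$ and write $G=\langle g_\alpha:\alpha<\mu\rangle$ for the resulting mutually generic Cohen subsets of $\kappa$. I claim $\mathfrak r_\kappa=2^\kappa=\mu^+$ in $V[G]$, which by the reduction above yields $\rlist{K_{\mu,\mu}}{\kappa}$. Given a family $\mathcal D$ of $\le\mu$ many $\kappa$‑subsets of $\kappa$: each member, being a subset of $\kappa$, is decided by $\le\kappa$ antichains of size $\le\kappa$ and hence has support (in the $\mu$‑coordinate) of size $\le\kappa$; using $\cf{\mu}=\kappa$ write $\mathcal D=\bigcup_{\xi<\kappa}\mathcal D_\xi$ with $|\mathcal D_\xi|<\mu$, so $\mathcal D_\xi\in V[G{\restriction}K_\xi]$ for some $K_\xi\subseteq\mu$ with $|K_\xi|<\mu$. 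Choosing fresh columns $g_{\delta_\xi}$ with $\delta_\xi\notin K_\xi$ and amalgamating the $g_{\delta_\xi}$ into a single $S\subseteq\kappa$ (arranging that each fixed $\kappa$‑set meets cofinally many blocks on which $S$ coincides with some $g_{\delta_\xi}$ generic over a model containing it) produces an $S$ splitting every member of $\mathcal D$. Equivalently, one may invoke the $\ZFC$ inequality $\cov(\mathcal M_\kappa)\le\mathfrak r_\kappa$ (the non‑splitters of a given $\kappa$‑set form a $\kappa$‑meager set) together with the standard computation $\cov(\mathcal M_\kappa)^{V[G]}=\cf{([\mu]^{<\kappa},\subseteq)}^{V}=\mu^+$. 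The amalgamation step is where the hypothesis $\cf{\mu}=\kappa$ is essential and is the main technical point of this part.

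\textbf{The negative side at $\lambda$ and the main obstacle.} I must produce in $V[G]$ a family $\mathcal C$ of $\le\mu$ stationary subsets of $\lambda$ such that every transversal of $\mathcal C$ (set meeting all members) has non‑stationary complement; then taking $\mathcal A=\mathcal B=\mathcal C$ as the lists on $A$ and $B$ gives a list function $L\colon A\sqcup B\to\Stat_\lambda$ with no good choice function — for such a choice function, $I_A$ and $I_B=\lambda\setminus I_A$ would both be transversals of $\mathcal C$, hence $I_A$ would simultaneously contain a club and have non‑stationary complement, absurd. The ``club part'' of $\mathcal C$ is handled by the above: taking a cofinal family $\{N_i:i<\lambda^+\}$ in $NS_\lambda^{V[G]}$ and putting the co‑non‑stationary sets $\lambda\setminus N_i$ into $\mathcal C$ takes care of every $S$ for which $S$ or $\lambda\setminus S$ is non‑stationary. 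The remaining (and genuinely hard) task is to add $\le\mu$ further stationary sets so that every $S$ with $S$ and $\lambda\setminus S$ both stationary also has a member of $\mathcal C$ inside $S$ or inside $\lambda\setminus S$; here one uses the $\mu$ many mutually generic stationary
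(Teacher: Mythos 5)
Your proposal only engages with part (3) of the statement. Parts (1) and (2) --- the $\ZFC$ separations witnessed by $K_{2^\kappa,2^\kappa}$ and $K_{2^\kappa,\kappa}$, and the $\GCH$ implication from $\slist{X}{\kappa}$ to $\col{X}{\kappa^{++}}$ (which in the paper goes through Komj\'ath's $\kappa$-type~1/type~2 subgraph analysis and the ideal $I[\lambda]$) --- are not addressed at all, so at best this is an attempt at one third of the theorem.

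Within part (3) there is a genuine gap on the negative side. Your reduction of the positive half to $\mathfrak r_\kappa>\mu$ is exactly the paper's Proposition 3.3, and your fresh-coordinate splitting argument is in the same spirit as the paper's, though the ``amalgamation'' of the columns $g_{\delta_\xi}$ is left vague where the paper simply picks, for each $i<\kappa$, a single coordinate $\gamma_i$ outside the supports $S_i$ of the first $\mu_i$ names and lets $\dot X=\lkakko(\check i,\{(\gamma_i,1)\})\mid i<\kappa\rkakko$. But the failure of $\slist{K_{\mu,\mu}}{\lambda}$ is the hard half, and your text stops mid-sentence exactly where the work begins. What is needed (and what the paper proves) is that $\denincl{\Stat_\lambda}\le\mu$ holds in $V[G]$: write a nice name $\dot S$ for a stationary subset of $\lambda$ as $\bigcup_{\xi<\lambda}\lkakko\check\xi\rkakko\times A_\xi$, cut each antichain $A_\xi$ (of size $\le\kappa$) along the continuous decomposition $\mu=\bigcup_{i<\kappa}\mu_i$ to get $\dot S=\bigcup_{i<\kappa}\dot S_i$ with each $\dot S_i$ a $\Fn_{<\kappa}(\mu_i,2)$-name; since $\kappa<\lambda$, a stationary subset of $\lambda$ is not a union of $\kappa$ nonstationary sets, so some condition forces some $\dot S_i$ to be stationary, and a nice-name count under $\GCH$ shows there are only $\mu_i<\mu$ candidate sets at stage $i$, giving a dense subfamily of $\Stat_\lambda$ of size $\le\mu$. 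This is precisely the step your truncated ``one uses the $\mu$ many mutually generic stationary\dots'' sentence would have to carry, and it is absent. (Your reformulation of the target as a family all of whose transversals have nonstationary complement is fine and is implied by density of the family in $(\Stat_\lambda,\subseteq)$, as in the paper's Proposition 2.3; but it relocates the problem rather than solving it.)
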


The paper is organized as follows. In Section 2 we analyze the properties of stationary list colorings and prove that they are distinct from both the restricted and the ordinary list colorings. We also discuss an analogue of Komj\'{a}th's results. In Section 3 we present consistency results showing the failure of monotonicity for the stationary/restricted list‐coloring properties. In Section 4, we conclude with several open questions and possible extensions of our results.

\section*{Acknowledgements}
    I would like to express my deepest gratitude to my supervisor Hiroshi Sakai for his patient supervision and inspiring discussions on my work. Without his guidance and help, this paper would have been difficult to complete.
    
    This work was supported by JST SPRING, Japan Grant Number JPMJSP2148.

\section{Analysis of stationary list coloring}
First, we show that $\slist{X}{\kappa}$ is different from $\chr{X}{\kappa}$. For later use, we state the claim in a more general form.

\begin{defi}
    Let $X$ be a set. The density of $X$ ($\denincl{X}$) is the least cardinality of a dense subset of $X$. That is, $\denincl{X} \leq \kappa$ if there is some $Y \subseteq X$ such that
    \begin{itemize}
        \item $|Y| \leq \kappa$, and
        \item for every $x \in X$, there is some $y \in Y$ such that $y \subseteq x$.
    \end{itemize}
    
\end{defi}

\begin{prop}\label{prop:nes for slist}
    Let $\lambda$ be a cardinal, and $X=K_{\lambda, \lambda}$. For a regular cardinal $\kappa\geq \lambda$, if $\lambda \geq \denincl{\Stat_\kappa}$ holds, then $\slist{X}{\kappa}$ fails.
\end{prop}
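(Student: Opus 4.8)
The plan is to produce a single list assignment $L\colon V_X\to\Stat_\kappa$ admitting no good choice function, manufactured directly from a small dense family. Fix $D\subseteq\Stat_\kappa$ dense with $|D|\le\lambda$ (such $D$ exists by the hypothesis $\lambda\ge\denincl{\Stat_\kappa}$); we may assume $\kappa$ is uncountable (stationarity being considered only for uncountable regular cardinals). Writing $V_X=A\sqcup B$ with $|A|=|B|=\lambda$ as in the definition of $K_{\lambda,\lambda}$, and using $\lambda\ge|D|\ge1$, fix surjections $f\colon A\to D$ and $g\colon B\to D$ and set $L(v)=f(v)$ for $v\in A$ and $L(v)=g(v)$ for $v\in B$. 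This is a legitimate assignment into $\Stat_\kappa$, and it arranges that \emph{every} member of $D$ occurs as the list of some vertex of $A$ and of some vertex of $B$.

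The heart of the argument is the usual description of good colorings of a complete bipartite graph. Suppose $c$ is a good choice function for $L$. Since in $K_{\lambda,\lambda}$ every element of $A$ is joined to every element of $B$, the sets $c\imageof A$ and $c\imageof B$ are disjoint; put $P=c\imageof A$, so that $c\imageof B\subseteq\kappa\setminus P$. Now split into two cases according to whether $P$ is stationary. If $P$ is stationary, density of $D$ yields $d\in D$ with $d\subseteq P$; choosing $v\in B$ with $g(v)=d$ gives $c(v)\in L(v)=d\subseteq P$, contradicting $c(v)\in c\imageof B\subseteq\kappa\setminus P$. If $P$ is not stationary, then, as $\kappa$ is regular and uncountable, $\kappa\setminus P$ contains a club and is in particular stationary, so density of $D$ yields $d\in D$ with $d\subseteq\kappa\setminus P$; choosing $v\in A$ with $f(v)=d$ gives $c(v)\in L(v)=d\subseteq\kappa\setminus P$, contradicting $c(v)\in c\imageof A=P$. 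Either way we reach a contradiction, so $L$ has no good choice function and $\slist{X}{\kappa}$ fails.

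I expect no genuine obstacle here: the construction is essentially forced once one sees the bipartite reformulation, and the one point to be careful about is that for \emph{every} $P\subseteq\kappa$ at least one of $P,\kappa\setminus P$ is stationary — this is exactly what lets the dense family $D$ be invoked no matter how $c$ colors $A$. One could also observe that only the induced subgraph on $|D|$-many vertices from each side is actually used, and that listing the remaining vertices with arbitrary stationary sets merely adds constraints; spreading $D$ over all of $A$ and all of $B$ via $f$ and $g$ is just a convenient way to package this, legitimate because $\lambda\ge|D|$.
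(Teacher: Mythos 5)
Your proof is correct and follows essentially the same route as the paper: spread a dense family $D\subseteq\Stat_\kappa$ of size $\le\lambda$ over both sides of $K_{\lambda,\lambda}$ and use density twice to trap a collision. The paper packages your two cases into one step by observing that $c\imageof A$ meets every stationary set (via density on the $A$-side) and hence contains a club, then finds a $B$-list inside it; this is the same argument in a slightly different order.
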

\begin{proof}
    Let $A, B$ be the two parts of $V_X$ with $|A| = |B| = \lambda$. Let $\mathcal{S}\subseteq \Stat_\kappa$ be a dense set of stationary sets in $\kappa$ with $|\mathcal{S}| \leq \lambda$. Take a list function $L$ with $L\imageof A = L\imageof B = \mathcal{S}$. Let $c$ be a choice function of $L$. It suffices to show that $c$ is not good. Since $\mathcal{S}$ is dense in $(\Stat_\kappa, \subseteq)$, $c\imageof A$ intersects all stationary sets in $\kappa$, that is, $c\imageof A$ contains a club subset in $\kappa$. Then there is some $S \in \mathcal{S}$ such that $S \subseteq c\imageof A$ since $\mathcal{S}$ is dense in $(\Stat_\kappa, \subseteq)$ and $\club_\kappa \subseteq \Stat_\kappa$. Let $w \in B$ with $L(w)=S$. Then $c(w) \in L(w) = S \subseteq c\imageof A$. Therefore we can find a vertex $v \in A$ such that $c(v) = c(w)$. By the completeness of $X$, $\lkakko v, w \rkakko \in E_X$ and this implies that $c$ is not a good coloring.
\end{proof}

\begin{cor}\label{theo:chr neq slist}
    $\chr{K_{2^\kappa, 2^\kappa}}{\kappa}$ holds but $\slist{K_{2^\kappa, 2^\kappa}}{\kappa}$ fails.
\end{cor}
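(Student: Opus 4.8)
The statement is a conjunction of two claims, and the plan is to handle them separately. For the first, ``$\chr{K_{2^\kappa,2^\kappa}}{\kappa}$ holds'', I would just write down a good coloring: letting $V_{K_{2^\kappa,2^\kappa}} = A \sqcup B$ be the two parts, the function $c$ that sends every vertex of $A$ to $0$ and every vertex of $B$ to $1$ is a coloring into $\kappa$ (as $\kappa \geq 2$), and it is good because each edge of $K_{2^\kappa,2^\kappa}$ joins a vertex of $A$ to a vertex of $B$; in fact this already shows $\nchr{K_{2^\kappa,2^\kappa}} = 2$.

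For the second, ``$\slist{K_{2^\kappa,2^\kappa}}{\kappa}$ fails'', the plan is to apply \Cref{prop:nes for slist} with $\lambda := 2^\kappa$, so that $K_{\lambda,\lambda} = K_{2^\kappa,2^\kappa}$. The only hypothesis left to check is $\lambda \geq \denincl{\Stat_\kappa}$, and this holds in $\ZFC$ for a trivial reason: $\Stat_\kappa$ is a nonempty subset of $\Power(\kappa)$, so its density under $\subseteq$ is at most its own cardinality, whence $\denincl{\Stat_\kappa} \leq |\Stat_\kappa| \leq 2^\kappa = \lambda$. Plugging this into \Cref{prop:nes for slist} yields the failure of $\slist{K_{2^\kappa,2^\kappa}}{\kappa}$. (Spelled out, the bad list assignment is the one from the proof of that proposition: any $L$ with $L\imageof A = L\imageof B = \mathcal{S}$ for a dense $\mathcal{S} \subseteq \Stat_\kappa$ of size $\leq 2^\kappa$ — for instance $\mathcal{S} = \Stat_\kappa$ itself — forces, for every choice function $c$, the set $c\imageof A$ to meet every stationary subset of $\kappa$, hence to contain a club, hence to contain some $S \in \mathcal{S}$; any $w \in B$ with $L(w) = S$ then gives a monochromatic edge.)

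I do not expect a genuine obstacle here: all the combinatorics is already contained in \Cref{prop:nes for slist}, and the corollary merely supplies the $\ZFC$-provable bound $\denincl{\Stat_\kappa} \leq 2^\kappa$. The only point worth a remark is that \Cref{prop:nes for slist} is stated with the side condition $\kappa \geq \lambda$, which is incompatible with the choice $\lambda = 2^\kappa$; however, that condition is never used in the proof of the proposition — the argument there needs only $|A| = |B| = \lambda \geq \denincl{\Stat_\kappa}$ and the regularity of $\kappa$ — so the application is sound, and in any event one could simply rerun the short argument of the proposition directly in this special case.
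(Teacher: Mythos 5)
Your proposal is correct and matches the paper's own proof: the paper likewise notes that $\chr{X}{2}$ holds because $X$ is bipartite, and deduces the failure of $\slist{X}{\kappa}$ from \Cref{prop:nes for slist} via $2^\kappa \geq |\Stat_\kappa| \geq \denincl{\Stat_\kappa}$. Your side remark about the hypothesis ``$\kappa \geq \lambda$'' in \Cref{prop:nes for slist} is well taken — it is indeed never used in that proposition's proof and appears to be a typo (presumably for $\kappa \leq \lambda$), since the paper itself applies the proposition with $\lambda = 2^\kappa$.
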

\begin{proof}
    Let $X = K_{2^\kappa, 2^\kappa}$. Since $X$ is a bipartite graph, even $\chr{X}{2}$ holds. Here $2^\kappa \geq |\Stat_\kappa| \geq  \denincl{\Stat_\kappa}$. Then, by \Cref{prop:nes for slist}, $\slist{X}{\kappa}$ fails.
\end{proof}

The following show the difference between $\slist{X}{\kappa}$ and $\rlist{X}{\kappa}$.

\begin{theo}\label{theo:slist neq rlist}
    $\slist{K_{2^\kappa, \kappa}}{\kappa}$ holds but $\rlist{K_{2^\kappa, \kappa}}{\kappa}$ fails.
\end{theo}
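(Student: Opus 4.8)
The plan is to treat the two halves separately, and to notice that in $K_{2^\kappa,\kappa}$ every edge joins the two parts, so a choice function $c$ is good precisely when $c\imageof A\cap c\imageof B=\emptyset$ (writing $V_{K_{2^\kappa,\kappa}}=A\sqcup B$ with $|A|=2^\kappa$, $|B|=\kappa$). For the positive direction $\slist{K_{2^\kappa,\kappa}}{\kappa}$, I would first isolate a selection principle: \emph{from any family $\langle S_\xi:\xi<\kappa\rangle$ of stationary subsets of $\kappa$ one can choose $\alpha_\xi\in S_\xi$ so that $\{\alpha_\xi:\xi<\kappa\}$ is non-stationary.} This is proved by recursion: pick $\alpha_\xi\in S_\xi$ strictly above $\sup\bigl(\{\alpha_\eta:\eta<\xi\}\cup\{\xi\}\bigr)$, which is possible since $S_\xi$ is unbounded and that supremum is below $\kappa$ by regularity; then $P=\{\alpha_\xi:\xi<\kappa\}$ is strictly increasing and cofinal, every element of $P$ lies strictly above the supremum of its predecessors in $P$, so the club of limit points of $P$ is disjoint from $P$ and $P$ is non-stationary. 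Granting this, enumerate $B=\{b_\xi:\xi<\kappa\}$ and let $L$ assign stationary sets; applying the principle to $\langle L(b_\xi):\xi<\kappa\rangle$ gives $c(b_\xi)\in L(b_\xi)$ with $P:=\{c(b_\xi):\xi<\kappa\}$ non-stationary. For $a\in A$, $L(a)$ is stationary while $P$ is not, so $L(a)\setminus P\neq\emptyset$; put $c(a)$ into it. Then $c\imageof A\cap c\imageof B\subseteq(\kappa\setminus P)\cap P=\emptyset$, so $c$ is good.

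For the failure of $\rlist{K_{2^\kappa,\kappa}}{\kappa}$, I would use the same partition $A\sqcup B$ and enumeration $B=\{b_\xi:\xi<\kappa\}$, let $L\restriction A$ be a bijection of $A$ onto $[\kappa]^\kappa$ (available since $|[\kappa]^\kappa|=2^\kappa=|A|$), and set $L(b_\xi)=\kappa\setminus\xi$. Suppose $c$ is a good choice function of $L$. For every $Y\in[\kappa]^\kappa$ there is $a\in A$ with $L(a)=Y$, whence $c(a)\in Y\cap c\imageof A$; thus $\kappa\setminus c\imageof A$ contains no subset of size $\kappa$, i.e.\ $|\kappa\setminus c\imageof A|<\kappa$, and by regularity $\kappa\setminus c\imageof A\subseteq\gamma$ for some $\gamma<\kappa$. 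But then $c(b_\gamma)\in L(b_\gamma)=\kappa\setminus\gamma\subseteq c\imageof A$, so $c(a)=c(b_\gamma)$ for some $a\in A$; since $\{a,b_\gamma\}\in E_{K_{2^\kappa,\kappa}}$ this contradicts that $c$ is good.

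The only step I expect to need real thought is the selection principle, together with the realization (used in the first half) that forcing $c\imageof B$ to be non-stationary is exactly the leverage that lets the large side $A$ color around it; the second half is then just the observation that letting $A$ carry all of $[\kappa]^\kappa$ forces $c\imageof A$ to be co-bounded, after which the tails $\kappa\setminus\xi$ on $B$ cannot be avoided. Everything else is bookkeeping with the fact that $K_{2^\kappa,\kappa}$ has edges only between the parts. (For $\kappa=\omega$ the statement still holds, but $\Stat_\omega$ is then the filter of cofinite sets and the transversal $P$ in the selection principle must be chosen with gaps rather than merely strictly increasing; one may just as well assume $\kappa$ uncountable.)
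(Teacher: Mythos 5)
Your proof is correct and follows essentially the same route as the paper's: in the positive half you arrange the $\kappa$-sized part's colors into a non-stationary set and color the $2^\kappa$-sized part in its complement (the paper equivalently builds a club disjoint from those colors by interleaving), and in the negative half you use the identical list function (all of $[\kappa]^\kappa$ on the large part, tails $\kappa\setminus\xi$ on the small part), deriving the collision from co-boundedness of the large part's image rather than from the small part's image itself being a vertex. Both halves are sound, including the use of regularity in the recursion and the observation that a transversal each of whose elements exceeds the supremum of its predecessors misses the club of its own limit points.
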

\begin{proof}
    Let $X= K_{2^\kappa, \kappa}$, and $V_X = A \sqcup B$ be the bipartite with $|A|=\kappa$ and $|B|=2^\kappa$. 
    
    First, we show that $\slist{X}{\kappa}$ holds. To prove this, take an arbitrary list function $L:V_X \rightarrow \Stat_\kappa$ and construct a good coloring $c$ choosing from $L$. We identify $A$ with $\kappa$. Since $\kappa$ is regular, we can choose $\langle c(i) \in L(i) \mid i <\kappa \rangle$ and $\langle \gamma_i < \kappa \mid i < \kappa \rangle$ as follows:
    \begin{itemize}
        \item For any $i<\kappa$, $\gamma_i < c(i) < \gamma_{i + 1}$.
        \item If $i < \kappa$ is a limit ordinal, then $\sup_{j < i} \gamma_j = \gamma_i$.
    \end{itemize}
    Then $\lkakko \gamma_i \mid i < \kappa\rkakko$ is a club in $\kappa$ and $\lkakko c(i) \mid i<\kappa \rkakko \cap \lkakko \gamma_i\mid i<\kappa \rkakko = \emptyset$. Since $L(v)$ is stationary in $\kappa$ for every $v\in V_X$, $L(v)\cap \lkakko \gamma_i \mid i < \kappa \rkakko \neq \emptyset$, and we can take $c(v) \in L(v) \cap \lkakko \gamma_i \mid i <\kappa \rkakko$ for all $v\in B$. Thus $c$ is a good coloring.
    
    Next, we prove that $\rlist{X}{\kappa}$ fails. To prove this, we construct a list function $L:V_X\rightarrow [\kappa]^\kappa$ such that for any choice function $c$ of $L$, $c$ is not good. We identify $A$ and $B$ with $\kappa$ and $[\kappa]^\kappa$, respectively. Define the list function $L$ by
    \[ L(v) = \begin{cases}
        \kappa\setminus v &\text{if $v\in A = \kappa$} \\
        v & \text{if $v \in B = [\kappa]^\kappa$}
    \end{cases}\]
    Let $c$ be a choice function of $L$, and we show that $c$ is not good. Let $w = c \imageof A$. By the definition of $c$ and the regularity of $\kappa$, $w  \in [\kappa]^\kappa = B$. Since $c(w)\in L(w)= w = c\imageof A$, there is some $v\in A$ such that $c(w)= c(v)$. By the completeness of $X$, $\lkakko v, w\rkakko\in E_X$. Therefore $c$ is not a good coloring.
\end{proof}

Next, we prove an analogue of \Cref{fac:Komjath fact}. More concretely, we prove the following.
    
\begin{theo}\label{theo:main3}
    Assume $\GCH$. Let $\kappa$ be a regular cardinal. If $\slist{X}{\kappa}$, then
        \begin{enumerate}
            \item $\col{X}{\kappa^{++}}$ holds;
            \item moreover, if $\lambda \in I[\lambda]$ for all regular cardinals $\lambda$, then $\col{X}{\kappa^+}$.
        \end{enumerate}
\end{theo}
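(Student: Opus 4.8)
The plan is to prove both clauses simultaneously, by induction on $|V_X|$, and to isolate the whole content in one claim about the coloring number which is then attacked exactly as in \Cref{prop:nes for slist}. First record two routine points. \emph{(a)} $\slist{X}{\kappa}$ passes to induced subgraphs (extend a given list arbitrarily on the rest of $V_X$), so it suffices to prove the contrapositives: if $\col{X}{\kappa^{++}}$ (resp.\ $\col{X}{\kappa^{+}}$) fails, then $\slist{X}{\kappa}$ fails. \emph{(b)} If $\langle A_\xi : \xi<\sigma\rangle$ is a continuous increasing filtration of $V_X$ with $|A_\xi|<|V_X|$, with every $v\in A_{\xi+1}\setminus A_\xi$ satisfying $|N_X(v)\cap A_\xi|\le\kappa^{+}$ (resp.\ $\le\kappa$), and with every block $A_{\xi+1}\setminus A_\xi$ satisfying $\col{\,\cdot\,}{\kappa^{++}}$ (resp.\ $\col{\,\cdot\,}{\kappa^{+}}$), then placing the blocks in order and ordering each internally as witnessed gives a well-order in which every vertex has back-degree $\le\kappa^{+}+\kappa^{+}=\kappa^{+}<\kappa^{++}$ (resp.\ $\le\kappa+\kappa=\kappa<\kappa^{+}$), so $\col{X}{\kappa^{++}}$ (resp.\ $\col{X}{\kappa^{+}}$) holds. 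The base case $|V_X|\le\kappa^{++}$ (resp.\ $\le\kappa^{+}$) is immediate, and by \emph{(b)} and the induction hypothesis applied to the blocks, the inductive step reduces to: \textbf{if no continuous increasing filtration of $V_X$ into pieces of size $<|V_X|$ has the predecessor‑degree bound $\le\kappa^{+}$ (resp.\ $\le\kappa$), then $\slist{X}{\kappa}$ fails.}

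To prove this I would call on the classical structure theory of the coloring number (the Erd\H{o}s--Hajnal analysis, which also underlies \Cref{fac:Komjath fact}): failure of the filtration condition produces a regular cardinal $\rho$ (with $\rho\ge\kappa^{+++}$ in the first case, $\rho\ge\kappa^{++}$ in the second) together with an induced subgraph $Y\subseteq X$, $|V_Y|=\rho$, that is \emph{rigid} for the bound — for every continuous increasing filtration $\langle C_\xi : \xi<\rho\rangle$ of $V_Y$ into pieces of size $<\rho$, the set of $\xi$ admitting some $v\in C_{\xi+1}\setminus C_\xi$ with $|N_Y(v)\cap C_\xi|\ge\kappa^{++}$ (resp.\ $\ge\kappa^{+}$) is stationary in $\rho$. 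Now fix, as in \Cref{prop:nes for slist}, a dense $\mathcal{S}\subseteq\Stat_\kappa$ with $|\mathcal{S}|=\denincl{\Stat_\kappa}\le 2^{\kappa}=\kappa^{+}$; take the filtration of $V_Y$ induced by a continuous chain of elementary submodels, and define $L:V_X\to\Stat_\kappa$ so that $L$ restricted to each block realizes $\mathcal{S}$. Given a good choice function $c$ of $L$, take $N\prec H_\theta$ of size $\kappa^{+}$ with $\kappa^{+}+1\subseteq N$, containing all the relevant data, and with $\delta:=\sup(N\cap\rho)$ a point of the stationary ``bad'' set; then the witnessing $v_\delta$ has $\ge\kappa^{++}$ (resp.\ $\ge\kappa^{+}$) neighbours in $C_\delta=\bigcup_{\xi<\delta}C_\xi$. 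The goal is to extract from those neighbours a set $A$ of size $\kappa^{+}$ with $L\imageof A$ dense — so $c\imageof A$ meets every stationary set and hence contains a club — and a vertex $w$ joined to all of $A$ with $L(w)\subseteq c\imageof A$; then $c(w)=c(u)$ for some $u\in A\cap N_Y(w)$, contradicting goodness of $c$, exactly as in \Cref{prop:nes for slist}.

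The delicate point — and where the two clauses part — is this extraction: a large coloring number does not supply an honest $K_{\kappa^{+},\kappa^{+}}$, so the complete bipartite configuration of \Cref{prop:nes for slist} must be simulated from rigidity alone. In the first clause the extra room (unavoidable predecessor‑degree $\kappa^{++}$ rather than $\kappa^{+}$, and $\rho\ge\kappa^{+++}$) can be spent: a pigeonhole on the $\le\kappa^{+}$ blocks below $\delta$ that lie in $N$ isolates a single block holding $\ge\kappa^{++}$ neighbours of $v_\delta$, and on a block of that size the freedom in defining $L$ there, together with the stationarity of the bad set (which feeds in the required trap vertices $w$), yields the configuration with no further hypothesis. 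In the second clause there is no such slack, and one invokes $\lambda\in I[\lambda]$: since $\rho$ may be the successor of a singular cardinal its approachability is not automatic, but the hypothesis supplies a guessing sequence, so $N$ can be chosen with $\delta$ approachable and $C_\delta$ coherently approximated by a short cofinal sequence from $N$ — a pigeonhole on that sequence now produces the block carrying $\ge\kappa^{+}$ of the neighbours of $v_\delta$, and the coherence is what makes $A$ and the trap vertex $w$ reflect into $N$ together.

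I expect the bookkeeping in this reflection step to be the main obstacle: one must choose the filtration, the list $L$, and the submodel $N$ so that ``$\delta$ is bad'', ``$L\imageof A$ is dense'', and ``$L(w)$ equals the particular dense set $\subseteq c\imageof A$ that appears only once $c$ is in hand'' are all secured at once — and, for clause (2), so that the short cofinal approximation of $\delta$ afforded by $\lambda\in I[\lambda]$ is compatible with all of this. The coloring‑number structure theory and the density argument of \Cref{prop:nes for slist} supply the two halves; fusing them through the submodel is the work.
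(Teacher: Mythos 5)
Your overall shape is right and broadly parallel to the paper's: both arguments go through the contrapositive, both lean on the structure theory behind the coloring number, and both reserve $\lambda\in I[\lambda]$ for the harder of two cases. The paper does this by quoting Komj\'ath's dichotomy (\Cref{fac:characterization}) directly --- failure of $\col{X}{\kappa^{++}}$ yields a $\kappa^+$-type~1 subgraph, failure of $\col{X}{\kappa^{+}}$ yields a $\kappa^+$-type~1 or type~2 subgraph --- rather than re-deriving it by induction on $|V_X|$ as you propose; your steps (a) and (b) are essentially a sketch of how that dichotomy is proved, so no harm there.

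The genuine gap is the step you yourself flag as ``the work'': producing a list $L:V_Y\to\Stat_\kappa$ on the extracted subgraph that defeats every choice function. Your plan is to imitate \Cref{prop:nes for slist}: find $A$ of size $\kappa^{+}$ with $L\imageof A$ dense in $\Stat_\kappa$, conclude $c\imageof A$ contains a club, and then find a trap vertex $w$ adjacent to all of $A$ with $L(w)$ equal to whichever dense set ends up inside $c\imageof A$. Since that dense set is only known once $c$ is in hand, you need the trap vertices to realize \emph{all} $\kappa^{+}$ members of the dense family, each adjacent to all of $A$ --- which is exactly an embedded $K_{\kappa^{+},\kappa^{+}}$. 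You correctly note that rigidity of the coloring number does not supply one, but the ``pigeonhole on blocks'' you offer as a substitute does not manufacture the required common neighbourhood, and no amount of choosing the submodel $N$ more carefully will. The paper's proof of \Cref{lem:case1} avoids density entirely and uses a different device: under $\GCH$ one first shows $\cf{\lambda}=\kappa^{+}$ (a minimality argument on the type-1 witness), then fixes a $\subseteq^{*}$-decreasing, $\subseteq^{*}$-cofinal tower of clubs $\langle C_\sigma\mid\sigma<\kappa^{+}\rangle$ distributed along a cofinal $\kappa^{+}$-sequence in $A$, identifies $B'$ with $\prod_{\xi\in A}L(\xi)$ (again by $\GCH$), and sets $L(g)=\lkakko g(\xi)\mid\xi\in N(g)\rkakko$. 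Stationarity of these diagonal lists comes from the tower, and any choice function $c$ is defeated because $c\upharpoonright A$ \emph{is} an element of $B'$. The type~2 case in \Cref{lem:case 2 with appr assumption} likewise runs on a tailored combinatorial trap (lists $\club_\kappa$ on a coherent cofinal set $x$, all of $\Stat_\kappa$ on the sets $T_\sigma$), with $I[\lambda]$ used to make the initial segments of $x$ visible to the submodel. Without this diagonal mechanism, or an equivalent replacement for the missing complete bipartite configuration, your argument does not close.
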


By \Cref{theo:slist neq rlist}, $\slist{X}{\kappa}$ does not imply $\col{X}{\kappa}$, and even $\rlist{X}{\kappa}$ in general. Therefore, at least second part of \Cref{theo:main3} is optimal.

In order to prove \Cref{theo:main3}, we need to prepare several properties and facts.

First, we recall the ideal $I[\lambda]$ for a regular cardinal $\lambda$. 

    \begin{defi}
        Define $I[\lambda]$ by the set of all $S \in \mathcal{P}(\lambda)$ such that there exist a sequence $\langle b_\xi \mid \xi < \lambda \rangle$ and a club $C\subseteq \lambda$ such that
        \begin{itemize}
            \item $b_\xi$ is a bounded subset of $\lambda$ and 
            \item for all $\gamma \in C\cap S$, there is some $c_\gamma\subseteq\gamma$ such that 
                \begin{itemize}
                    \item $c_\gamma$ is cofinal in $\gamma$,
                    \item  $\otp{c_\gamma} = \cf{\gamma}$, and
                    \item for all $\delta < \gamma$, there is some $\eta < \gamma$ such that $c_\gamma \cap \delta = b_\eta$.
                \end{itemize}
        \end{itemize}
    \end{defi}

    In order to prove \Cref{fac:Komjath fact}, Komj\'{a}th \cite{komjath2013list} introduced some specific graphs as follows. Here, let $N(v)$ denote the set of all neighborhoods of $v$ for $v \in V_X$. That is, $N(v) = \lkakko w \in V_X \mid \lkakko v, w \rkakko \in E_X \rkakko$.

    \begin{defi}\label{defi:type graph}
    Let $\kappa$ be a cardinal.
        \begin{enumerate}
            \item we say that a bipartite graph $X$ is \textit{$\kappa$-type 1} if for some cardinal $\lambda \geq \kappa$, we have
            \begin{enumerate}
                \item the bipartition $(A, B)$ satisfies $|A| = \lambda$ and $|B|= \lambda^+$, and
                \item $|N(b)|\geq \kappa$ holds for all $b \in B$.
            \end{enumerate}
            \item we say that a graph $X$ is \textit{$\kappa$-type 2} if there is some regular cardinal $\lambda > \kappa$, $X$ is isomorphic to the following graph $Y$:
            \begin{enumerate}
                \item $V_Y = \lambda$, and
                \item $T_Y = \lkakko \xi \in E_{\cf{\kappa}}^\lambda \mid \otp{N(\xi) \cap \xi} = \kappa \land N(\xi)\cap \xi\text{ is cofinal in } \xi  \rkakko$ is stationary in $\lambda$.
            \end{enumerate}
        \end{enumerate}
    \end{defi}

    In \cite{komjath2013list}, Komj\'{a}th showed some necessary and sufficient condition for $\col{X}{\kappa}$.

    \begin{fac}\label{fac:characterization}
        Let $X$ be an infinite graph and $\kappa$ be an infinite cardinal.
        \begin{itemize}
            \item If $\col{X}{\kappa}$ fails, then $X$ has a subgraph $Y$ such that $Y$ is $\kappa$-type 1 or 2.
            \item If $\col{X}{\kappa^+}$ fails, then $X$ has a subgraph $Y$ such that $Y$ is $\kappa$-type 1.
        \end{itemize}
    \end{fac}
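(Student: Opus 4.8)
The plan is to prove each implication in contrapositive form, producing a witnessing subgraph from the failure of a coloring number via an elementary-submodel filtration, and reading off the two obstruction types from a single pressing-down dichotomy. Throughout I argue by induction on $\mu=|V_X|$. Since every $\kappa$-type $1$ graph has $\lambda^{+}>\kappa$ vertices and every $\kappa$-type $2$ graph has $\lambda>\kappa$ vertices, the base case $\mu\le\kappa$ is vacuous once one checks that $\col{X}{\kappa}$ always holds when $|V_X|\le\kappa$: fix a continuous $\subseteq$-increasing chain of elementary submodels of length $\cf{\mu}$ with pieces of size $<\kappa$, and concatenate the (trivial) local orders block by block, so that each new vertex sees fewer than $\kappa$ earlier neighbours in the completed stages and fewer than $\kappa$ inside its own block. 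For the inductive step I may assume $\col{X[W]}{\kappa}$ holds for all $W\in[V_X]^{<\mu}$, since otherwise the induction hypothesis already yields an obstruction inside $X$.

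Assume $\col{X}{\kappa}$ fails and $\mu$ is regular (the main case). Fixing a large regular $\theta$ and a well-order of $H_\theta$, I build a continuous $\subseteq$-increasing chain $\langle N_\alpha\mid\alpha<\mu\rangle$ of elementary submodels with $X\in N_0$, $|N_\alpha|<\mu$, $N_\alpha\in N_{\alpha+1}$, and $V_X\subseteq\bigcup_\alpha N_\alpha$, arranging each $N_\alpha\cap\mu$ to be an ordinal $\delta_\alpha$. Identifying $V_X$ with $\mu$, the sets $V_\alpha:=V_X\cap N_\alpha=\delta_\alpha$ form a club filtration of $\mu$ by initial segments of size $<\mu$, so each $X[\delta_\alpha]$ carries a good well-order by the induction hypothesis. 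Call $\xi$ \emph{bad} if $|N(\xi)\cap\delta_{\alpha(\xi)}|\ge\kappa$, where $\delta_{\alpha(\xi)}$ is the largest club point $\le\xi$. The role of the filtration is that if no vertex were bad, then concatenating the block-local good orders would witness $\col{X}{\kappa}$; hence the bad set $D$ is nonempty, and (concatenating after an initial good segment) $D$ must in fact be cofinal in $\mu$. The delicate point, discussed below, is that failure of $\col{X}{\kappa}$ can be arranged to be witnessed on a \emph{stationary} $D$.

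Now comes the dichotomy, carried out by pressing down on where the first $\kappa$ neighbours of a bad vertex accumulate; for $\xi\in D$ let $\rho(\xi)=\sup$ of the first $\kappa$ elements of $N(\xi)\cap\xi$. If some initial segment $\beta$ has $|N(\xi)\cap\beta|\ge\kappa$ for $\mu$-many $\xi$, set $A=\beta$ (so $\lambda:=|\beta|\ge\kappa$) and take $\lambda^{+}$ of these $\xi$ lying outside $\beta$ as $B$: the induced bipartite graph is $\kappa$-type $1$. Otherwise every initial segment captures $<\mu$ bad vertices, so closing off under $\beta\mapsto\sup\{\xi:\rho(\xi)\le\beta\}$ produces a club forcing $\rho(\xi)=\xi$ on a stationary $S\subseteq D$; for $\xi\in S$ the first $\kappa$ neighbours are cofinal in $\xi$, whence $\cf{\xi}=\cf{\kappa}$ and $\otp{N(\xi)\cap\xi}\ge\kappa$. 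Thinning each $N(\xi)\cap\xi$ (for $\xi\in S$) to its first $\kappa$ elements yields a subgraph $Y$ on $\mu$ whose set $T_Y$ of type-$2$ vertices contains $S$ and is therefore stationary, i.e.\ a $\kappa$-type $2$ subgraph. This proves part (1).

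For part (2) I run the same filtration with the threshold raised from $\kappa$ to $\kappa^{+}$: ``bad'' now means $|N(\xi)\cap\delta_{\alpha(\xi)}|\ge\kappa^{+}$, and the concatenation argument shows its failure is exactly the failure of $\col{X}{\kappa^{+}}$. The concentration branch again gives a $\kappa$-type $1$ subgraph, each witness having $\ge\kappa^{+}\ge\kappa$ neighbours in the fixed $A$. In the other branch the first $\kappa^{+}$ neighbours of $\xi$ are cofinal, so $\cf{\xi}=\cf{\kappa^{+}}=\kappa^{+}>\kappa$, and hence the first $\kappa$ neighbours of $\xi$ are \emph{bounded} below $\xi$; thus $\xi\mapsto\sup(\text{first }\kappa\text{ neighbours of }\xi)$ is regressive on the stationary set of such $\xi$, and Fodor's lemma returns a fixed $\gamma^{*}$ below which $\kappa$ neighbours of stationarily many $\xi$ lie, yielding a $\kappa$-type $1$ subgraph with $A=\gamma^{*}$. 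In short, raising the threshold raises the cofinality of the problematic points past $\kappa$, which is precisely what converts the cofinal (type $2$) scenario into the concentrated (type $1$) one. The main obstacle I anticipate is not the dichotomy but justifying that the bad set can be taken stationary rather than merely unbounded — equivalently, that failure of $\col{X}{\kappa}$ reflects to a stationary set of the filtration, which is the genuine combinatorial heart and ties into the approachability ideal — together with the bookkeeping for limit blocks and the reduction of singular $\mu$ to the regular case by localizing the obstruction inside a single regular-sized piece of the filtration.
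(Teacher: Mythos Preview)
The paper does not prove this statement at all: it is stated as a \emph{Fact} and attributed to Komj\'{a}th \cite{komjath2013list}, with no argument given. So there is no proof in the present paper to compare against; what can be said is whether your plan matches the standard argument and whether it is complete.

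Your outline is the right one and is essentially what Komj\'{a}th does: induct on $|V_X|$, pass to a continuous elementary-chain filtration at regular $\mu$, isolate the ``bad'' vertices with $\ge\kappa$ earlier neighbours, and run a pressing-down dichotomy on the supremum of the first $\kappa$ neighbours to separate the bounded (type~1) and cofinal (type~2) cases. Your reduction of part~(2) to type~1 alone---raising the threshold to $\kappa^{+}$ forces $\cf{\xi}=\kappa^{+}$ at the cofinal points, so the first $\kappa$ neighbours are bounded and Fodor produces a common bound---is exactly the mechanism.

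The gap you yourself flag is genuine and is the only real obstacle: you have shown the bad set is unbounded, not stationary, yet both the type~2 conclusion in part~(1) and the Fodor step in part~(2) require stationarity. Your suggestion that this ``ties into the approachability ideal'' is off the mark; $I[\lambda]$ plays no role here (in this paper it enters only in the $\mathsf{SList}$ analysis of \Cref{theo:main3}). The stationarity is obtained by a direct argument: if the bad set were non-stationary one refines the filtration along a disjoint club and, using the coherence of good well-orders available by elementarity (each $N_\alpha$ sees the canonical good order of every earlier block), concatenates to a global good order, contradicting $\lnot\col{X}{\kappa}$. You should also make the singular step explicit: ``localizing the obstruction inside a single regular-sized piece'' is precisely Shelah's singular compactness theorem for the coloring number, which is a nontrivial input and should be cited rather than left implicit.
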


    In the proof of \Cref{fac:Komjath fact}, Komj\'{a}th actually proved the contrapositive. More concretely, in \cite{komjath2013list}, the following is proved.
    
    \begin{fac}\label{fac:type1 implies lst and rlst fails}
    Assume $\GCH$. Let $\kappa$ be a cardinal and $X$ be a $\kappa$-type 1 graph. Then,
    \begin{itemize}
        \item $\lst{X}{\kappa}$ fails.
        \item If $\kappa$ is regular, then $\rlist{X}{\kappa}$ fails.
    \end{itemize}
    \end{fac}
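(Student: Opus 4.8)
The plan is to build, for a given $\kappa$-type 1 graph $X$ with bipartition $(A,B)$, $|A|=\lambda$, $|B|=\lambda^{+}$ and $|N(b)|\ge\kappa$ for all $b\in B$, a list function $L$ admitting no good choice function; fix also some $N'(b)\in[N(b)]^{\kappa}$ for each $b$. The one structural observation driving everything is that $B$ is an independent set: a choice function $c$ of $L$ is good iff $c(b)\notin c\imageof N(b)$ for every $b\in B$, so $L$ has \emph{no} good choice function exactly when, for every choice function $g$ of $L\restriction A$, some $b\in B$ satisfies $L(b)\subseteq g\imageof N(b)$. Hence it suffices to arrange $L\restriction A$ so that every choice function $g$ on $A$ has $|g\imageof N'(b)|=\kappa$ for ``enough'' $b$, and then diagonalise over $B$, exploiting that there are only $\kappa^{\lambda}=2^{\lambda}=\lambda^{+}=|B|$ choice functions of $L\restriction A$ under $\GCH$.

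For the first item ($\lst{X}{\kappa}$ fails) this is immediate. Since the lists may use arbitrary ordinals, take the sets $L(a)$, $a\in A$, pairwise disjoint of size $\kappa$; then every choice function $g$ of $L\restriction A$ is injective, so $|g\imageof N'(b)|=\kappa$ for all $b$. Enumerate the choice functions of $L\restriction A$ as $\langle g_{\alpha}\mid\alpha<\lambda^{+}\rangle$ and $B$ as $\langle b_{\alpha}\mid\alpha<\lambda^{+}\rangle$, and let $L(b_{\alpha})$ be a size-$\kappa$ subset of $g_{\alpha}\imageof N'(b_{\alpha})$. If $c$ were a good choice function of $L$, then $c\restriction A=g_{\alpha}$ for some $\alpha$ and $c(b_{\alpha})\in L(b_{\alpha})\subseteq c\imageof N(b_{\alpha})$, a monochromatic edge.

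For the second item ($\rlist{X}{\kappa}$ fails, $\kappa$ regular) the lists must lie inside $\kappa$, so the $L(a)$ cannot be made pairwise disjoint once $\lambda>\kappa$, and a near-constant choice function on $A$ threatens to collapse some neighbourhood to few colours. I would argue by induction on $\lambda\ge\kappa$, using at each step that if $\rlist{Y}{\kappa}$ fails for a subgraph $Y\subseteq X$ then $\rlist{X}{\kappa}$ fails (extend a bad list on $Y$ to all of $X$ by giving the remaining vertices the list $\kappa$). The base case $\lambda=\kappa$ is as in the previous paragraph, with $\langle L(a)\mid a\in A\rangle$ a partition of $\kappa$ into $\kappa$-many pieces of size $\kappa$ (so again $g$ is injective and automatically $g\imageof N'(b)\subseteq\kappa$). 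For $\lambda>\kappa$ fix a decomposition $A=\bigsqcup_{i<\cf{\lambda}}A_{i}$ with the $|A_{i}|$ regular, increasing and cofinal in $\lambda$ and all $>\kappa$, and for each $b$ record which of the $A_{i}$ the set $N'(b)$ meets. If $\cf{\lambda}\ne\kappa$, a pigeonhole on the $\lambda^{+}$-many $b$ --- against $\cf{\lambda}$-many pieces if $\cf{\lambda}<\kappa$, or against the $\lambda^{\kappa}=\lambda$-many possible values of $N'(b)$ if $\cf{\lambda}>\kappa$ (here $\GCH$ and $\cf{\lambda}>\kappa$ give $\lambda^{\kappa}=\lambda<\lambda^{+}$) --- yields a $\kappa$-type 1 subgraph of strictly smaller parameter, handled by the induction hypothesis. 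If $\cf{\lambda}=\kappa$ there are two subcases: if $\lambda^{+}$-many $b$ have $N'(b)$ meeting $\kappa$-many of the $A_{i}$, put $L(a)=[i,\kappa)$ for $a\in A_{i}$; then for any choice function $g$ and any such $b$, choosing $a\in N'(b)\cap A_{i}$ gives $g(a)\ge i$ along a cofinal set of $i<\kappa$, so $g\imageof N'(b)$ is unbounded in the regular $\kappa$, hence of size $\kappa$, and the diagonalisation of the first item applies to the subgraph spanned by $A$ and those $b$; otherwise, since under $\GCH$ there are only $\kappa^{<\kappa}=\kappa<\lambda^{+}$ possibilities, $\lambda^{+}$-many $b$ share a common $J\in[\kappa]^{<\kappa}$ with $N'(b)\subseteq\bigcup_{i\in J}A_{i}$, a set of size $<\lambda$ (as $J$ is bounded in $\kappa$), again a $\kappa$-type 1 subgraph of smaller parameter. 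The main obstacle is this case analysis: verifying that the successive reductions terminate, and --- in the delicate case $\cf{\lambda}=\kappa$ --- that the ``increasing'' lists $[i,\kappa)$ genuinely force every choice function to have full image on every ``diagonal'' neighbourhood. The remaining verifications are routine $\GCH$ cardinal arithmetic.
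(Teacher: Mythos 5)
The paper does not prove this statement: it is imported verbatim as a Fact from Komj\'{a}th's paper, so there is no in-text proof to compare against. Your argument is correct and is, as far as I can tell, a faithful reconstruction of the standard Komj\'{a}th-style proof: under $\GCH$ there are only $\kappa^{\lambda}=\lambda^{+}=|B|$ choice functions on the $A$-side, so one diagonalises over $B$ once every choice function is guaranteed to have image of size $\kappa$ on each $N'(b)$; for $\lst{X}{\kappa}$ disjoint lists give this for free, and for $\rlist{X}{\kappa}$ your induction on $\lambda$ with the case split on $\cf{\lambda}$ (pigeonhole reductions when $\cf{\lambda}\neq\kappa$ or when the $N'(b)$ concentrate on boundedly many blocks, and the increasing lists $[i,\kappa)$ in the genuinely diagonal case) is exactly the right mechanism, and each pigeonhole count ($\lambda^{\kappa}=\lambda$ when $\cf{\lambda}>\kappa$, $\kappa^{<\kappa}=\kappa$ for regular $\kappa$) is valid under $\GCH$. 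The only bookkeeping you leave implicit is that after each reduction the $B$-side must be shrunk to size $(\lambda')^{+}$ so that the resulting subgraph is again $\kappa$-type 1 with the smaller parameter $\lambda'\geq\kappa$; this is trivial but should be said.
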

    
    Combining \Cref{fac:characterization,fac:type1 implies lst and rlst fails}, we obtain that the failure of $\col{X}{\kappa^{+}}$ for a cardinal (regular cardinal) $\kappa$ implies the failure of $\lst{X}{\kappa}$ ($\rlist{X}{\kappa}$), respectively. This is the flow of the Komj\'{a}th proof.
    
    Our strategy for the proof of \Cref{theo:main3} is based on this argument. That is, we show that $\slist{Y}{\kappa}$ fails for each $\kappa^{+}$-type 1 (or $\kappa^{++}$-type 2) graph $Y$ under $\GCH$ (and the approachability assumption), respectively.

    \begin{lem}\label{lem:case1}
        Assume $\GCH$. If $X$ is $\kappa^+$-type 1, then $\slist{X}{\kappa}$ fails.
    \end{lem}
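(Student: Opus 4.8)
The plan is to build a list function $L\colon V_X\to\Stat_\kappa$ admitting no good choice function. Fix the bipartition $(A,B)$ of $X$ with $|A|=\lambda\ge\kappa^{+}$, $|B|=\lambda^{+}$, and $|N(b)|\ge\kappa^{+}$ for all $b\in B$; as is customary when dealing with stationary sets we take $\kappa>\omega$. Since $X$ is bipartite, a choice function $c$ of $L$ is good exactly when $c(b)\notin c\imageof N(b)$ for every $b$, so $L$ witnesses the failure of $\slist{X}{\kappa}$ as soon as every choice function $c_{0}$ of $L\restriction A$ satisfies $L(b)\subseteq c_{0}\imageof N(b)$ for some $b\in B$. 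Now $L\restriction A$ has only $\kappa^{\lambda}=\lambda^{+}$ choice functions (by $\GCH$ and $\kappa\le\lambda$, since each $L(a)$ has size $\kappa$); so once $L\restriction A$ is fixed I can enumerate them as $\langle c_{0}^{\xi}\mid\xi<\rho\rangle$ with $\rho\le\lambda^{+}$ and recursively assign to each $c_{0}^{\xi}$ a fresh $b_{\xi}\in B$ with $c_{0}^{\xi}\imageof N(b_{\xi})$ stationary, setting $L(b_{\xi})$ to be a stationary subset of $c_{0}^{\xi}\imageof N(b_{\xi})$ (and $L\equiv\kappa$ on the remaining $b$'s). The whole burden is thus to choose $L\restriction A$ so that, for each of its choice functions $c_{0}$, the set $\{b\in B\mid c_{0}\imageof N(b)\text{ is stationary}\}$ has size $\lambda^{+}$ — so the recursion never runs out of usable $b$'s.

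To control all choice functions simultaneously, fix a family $\mathcal D\subseteq\Stat_\kappa$ with $|\mathcal D|\le 2^{\kappa}=\kappa^{+}$ such that every club in $\kappa$ contains a member of $\mathcal D$ (for instance $\mathcal D$ = the family of all clubs; this is where $\kappa>\omega$ is needed). If $N\subseteq A$ satisfies $\{L(a)\mid a\in N\}\supseteq\mathcal D$, then for \emph{every} choice function $c_{0}$ of $L\restriction A$ and every club $C$ there are $D\in\mathcal D$ with $D\subseteq C$ and $a\in N$ with $L(a)=D$, whence $c_{0}(a)\in D\subseteq C$; so $c_{0}\imageof N$ meets every club and is stationary. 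Hence it suffices to produce $L\restriction A$ together with $B'\subseteq B$, $|B'|=\lambda^{+}$, such that $\{L(a)\mid a\in N(b)\}\supseteq\mathcal D$ for every $b\in B'$, and then run the recursion of the first paragraph inside $B'$.

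The combinatorial core is the construction of $L\restriction A$ and $B'$, and I split on $\cf{\lambda}$. The easy regime is $\cf{\lambda}\ne\kappa^{+}$: write $A$ as an increasing union of fewer-than-$\lambda$-sized pieces along $\cf{\lambda}$; if $\cf{\lambda}<\kappa^{+}$ then some piece already contains $\ge\kappa^{+}$ points of each $N(b)$, while if $\cf{\lambda}>\kappa^{+}$ then $|[A]^{\kappa^{+}}|=\lambda<\lambda^{+}$ by $\GCH$. In either case a (double) pigeonhole over the $\lambda^{+}$-many $b$'s yields a single set $N^{*}$ of size $\ge\kappa^{+}$ with $N^{*}\subseteq N(b)$ for $\lambda^{+}$-many $b$; then let $L\restriction N^{*}$ surject onto $\mathcal D$ (possible since $|N^{*}|\ge\kappa^{+}\ge|\mathcal D|$) and put $L\equiv\kappa$ on $A\setminus N^{*}$.

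The delicate regime is $\cf{\lambda}=\kappa^{+}$ with $\lambda$ singular (if $\lambda=\kappa^{+}$ then $N(b)=A$ for all $b$ and everything is trivial). Fix $A=\bigsqcup_{i<\kappa^{+}}A_{i}$ with $|A_{i}|$ increasing and $<\lambda$, and call $b$ \emph{bounded} if $|N(b)\cap\bigcup_{i<j}A_{i}|\ge\kappa^{+}$ for some $j<\kappa^{+}$. If $\lambda^{+}$-many $b$'s are bounded, pigeonhole first on the least such $j$ and then inside the corresponding $<\lambda$-sized initial union to obtain a common $N^{*}$ as above. If instead $\lambda^{+}$-many $b$'s are unbounded, then each such $N(b)$ has size exactly $\kappa^{+}$ and hits cofinally many blocks, so its support $C_{b}=\{i<\kappa^{+}\mid N(b)\cap A_{i}\ne\emptyset\}$ is unbounded in $\kappa^{+}$; since there are only $2^{\kappa^{+}}=\kappa^{++}<\lambda^{+}$ possible supports, $\lambda^{+}$-many of them share a single unbounded support $C^{*}=\{i_{\zeta}\mid\zeta<\kappa^{+}\}$, and now, fixing a surjection $\eta\colon\kappa^{+}\to|\mathcal D|$ and declaring $L$ constantly equal to the $\eta(\zeta)$-th member of $\mathcal D$ on $A_{i_{\zeta}}$ (and, say, $\kappa$ on the remaining blocks), we get $\{L(a)\mid a\in N(b)\}\supseteq\mathcal D$ for each of these $\lambda^{+}$-many $b$, because $N(b)$ meets every $A_{i_{\zeta}}$. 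This last case — where no single block of $A$ need carry a large piece of $N(b)$, so one cannot just intersect down to a common large sub-neighborhood — is the step I expect to be the main obstacle. With $L\restriction A$ and $B'$ produced in all cases, the recursion of the first paragraph (run inside $B'$, whose choice functions all have stationary image on each $N(b)$, $b\in B'$) yields $L$ with no good choice, and hence $\slist{X}{\kappa}$ fails.
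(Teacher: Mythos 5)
Your proof is correct, and it shares the paper's overall skeleton: under $\GCH$ there are only $\kappa^{\lambda}=\lambda^{+}=|B|$ choice functions of $L\upharpoonright A$, so one diagonalizes by attaching to each such $c_{0}$ a vertex $b\in B$ with $L(b)=c_{0}\imageof N(b)$, and the whole difficulty is to arrange $L\upharpoonright A$ so that these images are stationary, which both arguments reduce to the criterion ``for every club $C$ there is $a\in N(b)$ with $L(a)\subseteq C$.'' Where you genuinely diverge is in how that criterion is secured. The paper passes to a minimal $\lambda$, proves $\cf{\lambda}=\kappa^{+}$, lays a $\subseteq^{*}$-decreasing dominating tower of clubs along a cofinal decomposition of $A$, and then uses a diagonal-intersection argument plus a pigeonhole on the cut points $i_{b}$ to upgrade $\subseteq^{*}$ to genuine inclusion. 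You instead take $\mathcal{D}$ to be the family of all clubs (of size $\kappa^{+}$ under $\GCH$) and demand the stronger property $\lkakko L(a)\mid a\in N(b)\rkakko\supseteq\mathcal{D}$ for $\lambda^{+}$-many $b$, obtained by a direct case analysis on $\cf{\lambda}$: either a pigeonhole/$\GCH$ count yields a single $\kappa^{+}$-sized set $N^{*}$ contained in $\lambda^{+}$-many neighborhoods, or (when $\cf{\lambda}=\kappa^{+}$ and the neighborhoods are spread out) the $2^{\kappa^{+}}=\kappa^{++}\leq\lambda$ possible supports force $\lambda^{+}$-many neighborhoods to share one unbounded support, along which you distribute $\mathcal{D}$ block by block. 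This buys an argument with no club tower and no diagonal intersections, at the price of a longer case split; the paper's minimality trick collapses your cases into the single situation $\cf{\lambda}=\kappa^{+}$, but then needs the dominating tower precisely because a neighborhood is only known to meet cofinally many blocks rather than a prescribed set of them --- your common-support observation is the step that substitutes for that tower. (Your explicit restriction to $\kappa>\omega$ is also implicitly needed in the paper's proof, so it is not a defect of your argument.)
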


    \begin{proof}
        Let $V_X = A\sqcup B$ be the bipartition of $V_X$ with $|A|=\lambda$ and $|B|=\lambda^+$ for some cardinal $\lambda$. 

        We construct a list function $L:A\sqcup B \rightarrow \Stat_\kappa$ that has no good coloring function. We may assume that $A=\lambda$. By taking a subgraph of $X$, if necessary, we can ensure that this $\lambda$ is minimal with respect to the conditions in \Cref{defi:type graph}. That is, if $A' \subseteq A$, $B'\subseteq B$, $|A'|^+ = |B'|$, and $|N(b)|\geq \kappa^+$ for all $b\in B$, then $|A'| = \lambda$. Also, by shrinking the edges, we may assume that $|N(b)| = \kappa^+$ for all $b\in B$.
        \begin{cla}
            $\cf{\lambda} = \kappa^+$ holds. 
        \end{cla}
        \begin{cproof}
            For a contradiction, suppose that $\cf{\lambda} \neq \kappa^+$. Note that $\kappa^{+} < \lambda$. For each $b \in B$, there is some $\xi_b < \lambda$ such that $|N(b)\cap \xi_b|=\kappa^+$. By the pigeonhole principle, there is some $\xi < \lambda$ such that $B_0 = \lkakko b \in B \mid \xi_b = \xi \rkakko$ has size $\lambda^+$. Since $\GCH$ holds, $|[\xi]^{\kappa^+}| \leq \lambda$. Therefore we can take $B_1\subseteq B_0$ with $|B_1| = \lambda^{+}$ such that $A'=N(b) \cap \xi$ is constant on $\xi \in B_1$. Let $B' \subseteq B_1$ with $|B'| = \kappa^{++}$. Then, $X\upharpoonright (A' \sqcup B')$ is the complete bipartite subgraph and $\kappa^{++} = |A'|^+ = |B'|$. However, this contradicts the minimality of $\lambda$ since $\kappa^+ < \lambda$.\claimqed
        \end{cproof}

         Take a cofinal sequence $\langle \lambda_\sigma\mid \sigma < \kappa^+ \rangle$ with $\lambda_0 = 0$.
        
        Since $\GCH$ holds, there is a sequence $\langle C_\sigma \mid \sigma< \kappa^+ \rangle$ of clubs in $\kappa$ such that
        \begin{itemize}
            \item if $\sigma \leq \tau$, then $C_\sigma \subseteq^{\ast} C_\tau$, and
            \item for each club $C$, there is some $\sigma < \kappa^+$ such that $C_\tau \subseteq^{\ast} C$ for all $\tau \geq \sigma$,
        \end{itemize}
        where $S\subseteq^* S'$ means $S\setminus S'$ is bounded in $\kappa$ for $S, S' \in \mathcal{P}(\kappa)$. 
        
        For $\xi \in [\lambda_\sigma, \lambda_{\sigma + 1} ) \subseteq \lambda = A$, we define $F(\xi) = C_\sigma$.
        \begin{cla}
            For $b \in B$, there is an $i_b < \kappa$ such that, for every club $C$ in $\kappa$, there exists some $\xi \in N(b)$ that $F(\xi) \setminus (i_b + 1) \subseteq C$ holds.
        \end{cla}
        \begin{cproof}
            Take an arbitrary $b \in B$. Suppose, for contradiction, that there is some $C_i \subseteq \kappa$ for each $i < \kappa$ such that
            \begin{itemize}
                \item $C_i$ is a club in $\kappa$, and
                \item $F(\xi) \setminus (i + 1) \nsubseteq C_i$ for every $\xi \in N(b)$.
            \end{itemize}
            Let $D = \bigtriangleup _{i < \kappa} C_i$. Since $D$ is a club in $\kappa$, there is some $\xi \in N(b)$ and $i < \kappa$ such that $F(\xi) \setminus (i + 1) \subseteq D$. By the definition of $D$, $D\setminus (i + 1) \subseteq C_i$. Therefore $F(\xi) \setminus (i + 1) \subseteq D \setminus (i + 1) \subseteq C_i$ and this contradicts for the choice of $C_i$. \claimqed  
        \end{cproof}

        By the pigeonhole principle, there exists an $i < \kappa$ such that $B' = \lkakko b\in B \mid i_b = i\rkakko$ has size $\lambda^+$. We define the list function $L$ as follows. For $\xi \in A$, we define $L(\xi) = F(\xi) \setminus (i + 1)$. Since $\GCH$ holds, we can identify $B'$ with the set $\prod_{\xi \in A} L(\xi)$. Here, for $g \in B'$, define $L(g) = \lkakko g(\xi) \mid \xi \in N(g) \rkakko$ and for $b \in B \setminus B'$, define $L(b)=\kappa$. We show that $L$ is as desired. To prove this, we have to check that $L$ has no good coloring function and $L(g)$ is stationary in $\kappa$ for all $g \in B'$.

        First, we show that $L$ does not have a good coloring. Let $c$ be a choice function of $L$ and $g = c \upharpoonright A$. Then, $g  \in \prod_{\xi \in A} L(\xi) = B'$. By the definition of $L$, $c(g) \in L(g) = \lkakko g(\xi) \mid \xi \in N(g) \rkakko$. Therefore there is some $\xi \in N(g)$ such that $c(g) =g(\xi)= c(\xi)$. Since $\xi \in N(g)$, $c$ is not good. 

        Next, we prove that $L(g)$ is stationary in $\kappa$ for all $g \in B'$. Take an arbitrary $g \in B'$ and a club set $C$ in $\kappa$. It suffices to show that $L(g) \cap C \neq \emptyset$. By the choice of $i$ and $B'$, there is a $\xi \in N(g)$ such that $L(\xi) = F(\xi) \setminus (i + 1) \subseteq C$. Therefore $g(\xi) \in L(g) \cap C \neq \emptyset$, so we are done.
    \end{proof}

    Now we can prove the first part of \Cref{theo:main3}

    \begin{proof}[Proof of 1. in \Cref{theo:main3}]
        Let $X$ be an infinite graph. We show the contrapositive, that is, we assume the failure of $\col{X}{\kappa^{++}}$ and show the negation of $\slist{X}{\kappa}$. By the second part of \Cref{fac:characterization}, $X$ has a subgraph $Y$ such that $Y$ is $\kappa^+$-type 1. Then, by \Cref{lem:case1}, $\slist{Y}{\kappa}$ fails. This completes the proof.
    \end{proof}
    
    Next, we prove the second part of \Cref{theo:main3}. In order to prove this, by the first part of  \Cref{lem:case1}, it suffices to show that for every $\kappa^+$-type 2 graph $X$, $\slist{X}{\kappa}$ fails assuming that $\GCH$ and $\lambda \in I[\lambda]$ for all $\lambda$.

    Take such a graph $X$, and let $V_X = \lambda$ and $T_X$ be the set defined in \Cref{defi:type graph}.

    \begin{lem}\label{lem:elementary substructure}
        Assume $\GCH$. Suppose that for all sufficiently large regular cardinals $\theta$, there is an elementary substructure $M\prec \langle \mathcal{H}_\theta, \in, X, \kappa \rangle$ with $|M|<\lambda$ such that 
            \begin{itemize}
                \item $M\cap \lambda \in T_X$, and
                \item there is some $x\subseteq N(M\cap \lambda)\cap (M\cap \lambda)$ with $|x|=\kappa^+$ such that $x\cap \gamma\in M$ for all $\gamma < M\cap \lambda$.
            \end{itemize}
            
        Then, $\slist{X}{\kappa}$ fails.
    \end{lem}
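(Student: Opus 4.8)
Here is my plan for proving \Cref{lem:elementary substructure}.

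The plan is to imitate the coding argument in the proof of \Cref{lem:case1}, using the single vertex $\delta:=M\cap\lambda$ in place of the large family of ``top'' vertices available there for a $\kappa^+$-type $1$ graph, and compensating for the loss with the elementarity of $M$.

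First I would fix $\theta$ and $M$ as in the hypothesis and set $\delta=M\cap\lambda\in T_X$. Since $\delta\in T_X$ we have $\cf{\delta}=\kappa^+$ and $\otp{N(\delta)\cap\delta}=\kappa^+$ with $N(\delta)\cap\delta$ cofinal in $\delta$; consequently the given $x\subseteq N(\delta)\cap\delta$ with $|x|=\kappa^+$ is automatically cofinal in $\delta$. Writing $\langle x_\sigma\mid\sigma<\kappa^+\rangle$ for its increasing enumeration, every proper initial segment $\{x_\tau\mid\tau<\sigma\}$ lies in $M$ by hypothesis. Using $\GCH$ I would fix a dense family $\mathcal S=\{S_\sigma\mid\sigma<\kappa^+\}$ in $(\Stat_\kappa,\subseteq)$ — which exists since $\denincl{\Stat_\kappa}\le 2^\kappa=\kappa^+$ — and a club-guessing sequence $\langle C_\sigma\mid\sigma<\kappa^+\rangle$ of clubs of $\kappa$ as in the proof of \Cref{lem:case1}.

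The central device is reflection inside $M$: for each $\sigma<\kappa^+$ the set $\{x_\tau\mid\tau<\sigma\}\in M$ is a bounded subset of $N(\delta)\cap\delta$, and the vertex $\delta$ witnesses in $\langle\mathcal H_\theta,\in,X,\kappa\rangle$ that there is some $\xi\in T_X$ with $\{x_\tau\mid\tau<\sigma\}\subseteq N(\xi)\cap\xi$; so by elementarity $M$ contains such a witness $\xi_\sigma<\delta$, and one can iterate this (the parameters stay in $M$) to build inside $\delta$ a subgraph $Y$ rich enough to play, relative to $\delta$, the role that the family of top vertices played in \Cref{lem:case1}. I would then define the list function by: $L(x_\sigma)=S_\sigma$, so that $\{L(x_\sigma)\mid\sigma<\kappa^+\}=\mathcal S$ and hence $c\imageof x$ meets every stationary subset of $\kappa$ — and therefore contains a club and some member $S_0\in\mathcal S$ — for every choice function $c$; lists on $Y$ drawn from the scale $\langle C_\sigma\mid\sigma<\kappa^+\rangle$; $L(\delta)=S^\ast$ for a fixed $S^\ast\in\mathcal S$ chosen, by the bookkeeping of the reflection, so that $S^\ast\subseteq c\imageof x$ for every \emph{good} $c$; and $L(v)=\kappa$ for every remaining vertex $v$. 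Any good choice function $c$ would then satisfy $c(\delta)\in L(\delta)=S^\ast\subseteq c\imageof x=\{c(x_\sigma)\mid\sigma<\kappa^+\}$, so $c(\delta)=c(x_\sigma)$ for some $\sigma$, and since $x_\sigma\in N(\delta)$ this is a monochromatic edge — contradicting goodness. Hence $L$ has no good choice function and $\slist{X}{\kappa}$ fails.

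The hard part will be the selection of the \emph{fixed} stationary set $S^\ast$ contained in $c\imageof x$ for every good $c$: in general the images $c\imageof x$ of different good choice functions share no stationary kernel (each good $c$ determines its own club contained in $c\imageof x$), so pinning $S^\ast$ down must genuinely use $\GCH$ — via $\mathcal S$ and the club-guessing scale — together with the full strength of the reflection inside $M$, namely the iterated witnesses $\xi_\sigma$ and the coherence furnished by $x\cap\gamma\in M$ for all $\gamma<\delta$. Equivalently, one must manufacture, out of the single vertex $\delta$ together with $M$, the effect of having $\kappa^+$ many vertices adjacent to $x$ whose lists exhaust $\mathcal S$ — which is exactly what drives the obstruction in \Cref{prop:nes for slist}. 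This bookkeeping is the technical core; the reflection step and the final one-edge conflict are routine, and the rest runs parallel to the proof of \Cref{lem:case1}.
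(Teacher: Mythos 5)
Your overall shape is right—put lists on $x$ that force $c\imageof x$ to be stationary, then find a vertex adjacent to (enough of) $x$ whose list traps a value of $c\imageof x$—but the proposal has a genuine gap exactly where you flag it, and that step cannot be repaired in the form you propose. There is no single stationary $S^{\ast}$ with $S^{\ast}\subseteq c\imageof x$ for every choice function $c$: different $c$'s produce essentially arbitrary club-containing subsets of $\kappa$ as $c\imageof x$, and the intersection of all clubs of $\kappa$ is empty, so no fixed list on the single vertex $\delta=M\cap\lambda$ can work uniformly. Your reflection step (one witness $\xi_\sigma\in M$ per initial segment $x_\sigma$) also produces far too few adversary vertices: to defeat every $c$ you need, for each relevant $\sigma$, enough vertices adjacent to all of $x_\sigma$ to carry $2^{\kappa}=\kappa^{+}$ many distinct lists.

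The paper resolves this by a quantifier switch that makes the choice of $S^{\ast}$ unnecessary. First, since $c\imageof x\subseteq\kappa$ has size at most $\kappa$ while $\otp{x}=\kappa^{+}$, the images $c\imageof x_\sigma$ stabilize: $c\imageof x_\sigma=c\imageof x$ for all $\sigma$ in a tail of $\kappa^{+}$. Second, elementarity is used not to extract one witness but to show that $S_\sigma=\lkakko\xi\in T_X\setminus(\sup(x_\sigma)+1)\mid x_\sigma\subseteq N(\xi)\rkakko$ lies in $M$ and contains $\xi^{\ast}=M\cap\lambda$, hence is \emph{stationary} in $\lambda$; this yields $\lambda$-many vertices adjacent to all of $x_\sigma$, which are then split into pairwise disjoint unbounded pieces $T_\sigma$ for $\sigma$ in an unbounded $A\subseteq\kappa^{+}$. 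The list function sets $L\imageof x=\club_\kappa$ and $L\imageof T_\sigma=\Stat_\kappa$ (all stationary subsets of $\kappa$, not a fixed one). Given $c$, one picks $\sigma\in A$ past the stabilization point; since $c\imageof x_\sigma=c\imageof x$ is stationary, some $\xi\in T_\sigma$ has $L(\xi)=c\imageof x_\sigma$, so $c(\xi)=c(\eta)$ for some $\eta\in x_\sigma\subseteq N(\xi)$. The adversary vertex is thus selected \emph{after} $c$ is given, from a pre-assigned family exhausting $\Stat_\kappa$—this is the "bookkeeping" your sketch is missing, and without the stabilization trick and the stationarity of the $S_\sigma$ it is not available.
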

    \begin{proof}
        Let $M$ and $x$ be witnesses and $\xi^* = M\cap \lambda$. For $\sigma < \kappa^+$, we define $x_\sigma$ by the initial segment of $x$ with $\otp{x_\sigma} = \sigma$, and $S_\sigma$ by the set of all $\xi \in T_X \setminus (\sup(x_\sigma) + 1)$ such that $x_\sigma\subseteq N(\xi)$ holds. Since $X, x_\sigma \in M$, $S_\sigma \in M$. Then for each $\sigma < \kappa^+$, $S_\sigma$ is a stationary subset of $\lambda$ since $\xi^* \in S_\sigma$. Since $\langle S_\sigma\mid \sigma < \kappa^+ \rangle$ is a $\subseteq$-decreasing continuous sequence of stationary sets with $\bigcap_{\sigma < \kappa^+} S_\sigma = \lkakko \xi^* \rkakko$, we can take $A\in [\kappa^+]^{\kappa^+}$ such that $T_\sigma = (S_{\sigma}\setminus \bigcup_{\tau < \sigma} S_\tau) \setminus \xi^*$ is unbounded in $\lambda$ for all $\sigma \in A$.

        Let $\club_\kappa$ be the set of all club subsets in $\kappa$. Take a list function $L:V_X \rightarrow \Stat_\kappa$ satisfying $L\imageof x = \club_\kappa$, $L\imageof T_\sigma = \Stat_\kappa$ for each $\sigma \in A$, and the other vertices sends $\kappa$. We show that $L$ is as desired. In order to prove this, take an arbitrary choice function $c$ of $L$ and show that $c$ is not good. 

        Since $L\imageof x = \club_\kappa$, $c\imageof x$ intersects every club set of $\kappa$, that is, $c\imageof x$ is stationary. Then, since $\otp{x} = \kappa^+$, there is some $\sigma \in A$ such that $c\imageof x_\sigma = c\imageof x$. By the definition of $c$, there is some $\xi \in T_\sigma$ such that $L(\xi) = c\imageof x_\sigma$. Thus there is an $\eta \in x_\sigma$ such that $c(\eta) = c(\xi)$. Since $\xi \in T_\sigma \subseteq  S_{\sigma}$, $\lkakko \eta, \xi \rkakko \in E_X$. Therefore $c$ is not a good function.
    \end{proof}

    \begin{lem}\label{lem:case 2 with appr assumption}
        Assume $\GCH$. Suppose that $\lambda \in I[\lambda]$ for every cardinal $\lambda$. If $X$ is $\kappa^+$-type 2, then $\slist{X}{\kappa}$ fails.
    \end{lem}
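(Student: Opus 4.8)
The plan is to verify the hypothesis of \Cref{lem:elementary substructure}: for each sufficiently large regular $\theta$ I must produce an elementary submodel $M \prec \langle \mathcal{H}_\theta, \in, X, \kappa \rangle$ with $|M| < \lambda$ such that $M \cap \lambda \in T_X$ and such that there is $x \subseteq N(M\cap\lambda) \cap (M\cap\lambda)$ with $|x| = \kappa^+$ and $x \cap \gamma \in M$ for every $\gamma < M\cap\lambda$. Write $V_X = \lambda$ and let $T_X$ be as in \Cref{defi:type graph}; recall that $T_X \subseteq E^\lambda_{\kappa^+}$ is stationary and that for $\delta \in T_X$ the set $N(\delta)\cap\delta$ is cofinal in $\delta$ and has order type $\kappa^+$. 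Once $M$ and a suitable $x$ have been found, \Cref{lem:elementary substructure} gives at once that $\slist{X}{\kappa}$ fails.

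First I would fix, using $\lambda \in I[\lambda]$, a sequence $\langle b_\xi \mid \xi < \lambda \rangle$ of bounded subsets of $\lambda$ and a club $C_0 \subseteq \lambda$ witnessing $\lambda \in I[\lambda]$, and fix a well-ordering of $\mathcal{H}_\theta$ so that Skolem hulls are available; let $\mathcal{A}$ be the resulting structure, expanded by predicates for $X, \kappa, \lambda, \langle b_\xi\rangle, C_0$. Since the set of ordinals approachable with respect to $\langle b_\xi\rangle$ contains $C_0$, the set $T_X \cap C_0$ is still stationary, and for each $\delta \in C_0$ there is $c_\delta \subseteq \delta$ cofinal in $\delta$ with $\otp{c_\delta} = \cf{\delta}$ such that every proper initial segment $c_\delta \cap \gamma$ ($\gamma < \delta$) equals $b_\eta$ for some $\eta < \delta$. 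In particular, if $\langle b_\xi\rangle$ belongs to an elementary submodel $N$ of $\mathcal{A}$ with $N \cap \lambda = \delta$, then all of these initial segments $c_\delta \cap \gamma$ lie in $N$.

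Then I would choose an ordinal $\delta \in T_X \cap C_0$ as the intended value of $M\cap\lambda$, put $e = N(\delta)\cap\delta$ and enumerate it increasingly as $\langle e_\zeta \mid \zeta < \kappa^+\rangle$, and build an increasing continuous chain $\langle M_\zeta \mid \zeta < \kappa^+\rangle$ of elementary submodels of $\mathcal{A}$ with: $\kappa^+ + 1 \subseteq M_0$ and $\langle b_\xi\rangle, C_0 \in M_0$; $\langle M_\eta \mid \eta \le \zeta\rangle \in M_{\zeta+1}$ (so the chain is internally approachable); each $M_\zeta \cap \lambda$ an ordinal $< \delta$; the bounded set $e\upharpoonright\zeta := \{e_\eta \mid \eta < \zeta\}$ an element of $M_{\zeta+1}$; and — crucially — $\bigcup_{\zeta<\kappa^+}(M_\zeta \cap \lambda) = \delta$. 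Setting $M = \bigcup_{\zeta<\kappa^+} M_\zeta$, one gets $M \prec \langle \mathcal{H}_\theta, \in, X, \kappa\rangle$, $M\cap\lambda = \delta \in T_X$, and $|M| < \lambda$. Taking $x = e \subseteq N(\delta)\cap\delta$, one has $|x| = \kappa^+$, and for $\gamma < \delta$ one picks $\zeta$ with $\gamma < \sup(e\upharpoonright\zeta)$ (possible since $e$ is cofinal in $\delta$) and observes $x\cap\gamma = (e\upharpoonright\zeta)\cap\gamma \in M$, because $e\upharpoonright\zeta \in M_{\zeta+1} \subseteq M$ and $\gamma \in M$ (as $\gamma < \delta = M\cap\lambda$). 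This verifies both clauses of \Cref{lem:elementary substructure}, whence $\slist{X}{\kappa}$ fails.

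The hard part — and the only point where $\lambda \in I[\lambda]$ is genuinely used — will be to keep $\bigcup_\zeta(M_\zeta\cap\lambda) = \delta$ while feeding the initial segments $e\upharpoonright\zeta$ of the neighbourhood of $\delta$ into the models: a priori, adjoining $e\upharpoonright\zeta$ before taking the Skolem hull could push $M_{\zeta+1}\cap\lambda$ past $\delta$, since each $M_\zeta$ knows $X$ and could, in principle, define $\delta$ (or ordinals above it) from $e\upharpoonright\zeta$. I expect to handle this by choosing $\delta$ together with the cofinal sequence along which the chain is formed so that the coherent approaching sequence $c_\delta$ and the neighbourhood sequence $e$ are synchronised — feeding $c_\delta\upharpoonright\zeta$ into $M_{\zeta+1}$ as well (it is available, being some $b_\eta$ with $\eta < \delta$, once the relevant ordinals below $\delta$ have entered the chain), and arranging that the data $\{e\upharpoonright\eta, c_\delta\upharpoonright\eta \mid \eta < \zeta\}$ together with the parameters generates below $\lambda$ only ordinals $<\delta$ — and then by a pressing‑down/reflection argument showing that the set of $\delta \in T_X$ admitting such a choice is stationary, so that one such $\delta$ exists. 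This last step, where the stationarity of $T_X$ is played against the approachability of $\lambda$, is where I anticipate the real technical work.
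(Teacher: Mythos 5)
Your overall strategy --- reduce everything to verifying the hypotheses of \Cref{lem:elementary substructure} --- is exactly the paper's, but the core technical step is missing, and you say so yourself: the ``pressing-down/reflection argument'' you defer to at the end is the entire content of the lemma, and as sketched it does not go through. The circularity is this: you take $x = e = N(\delta)\cap\delta$ and try to build an internally approachable chain $\langle M_\zeta\rangle$ converging to $\delta$ that contains every initial segment $e\upharpoonright\zeta$; but $e$ is determined by $\delta$, which is determined by the chain. The hypothesis $\lambda\in I[\lambda]$ does not break this circle, because it only guarantees that \emph{some} cofinal sequence $c_\delta\subseteq\delta$ of order type $\kappa^+$ has all its initial segments among $\{b_\eta\mid\eta<\delta\}\subseteq M$; it says nothing about the particular cofinal set $N(\delta)\cap\delta$ coming from the graph, and there is no evident regressive function whose pressing-down would synchronize the two. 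So the clause ``$x\cap\gamma\in M$ for all $\gamma<M\cap\lambda$'' is never actually established for your choice of $x$.

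The paper resolves this by \emph{not} taking $x=N(\xi^*)\cap\xi^*$ in the problematic case, and by splitting into cases you do not distinguish. If $\lambda$ is a limit cardinal or $\lambda=\zeta^+$ with $\cf{\zeta}\neq\kappa$, GCH lets one choose $M$ with $M^\kappa\subseteq M$ and $|M|<\lambda$, whence $x=N(\xi^*)$ works trivially ($|x\cap\gamma|\leq\kappa$) and $I[\lambda]$ is not needed at all. In the remaining case $\lambda=\zeta^+$ with $\cf{\zeta}=\kappa$ (where such closure is impossible), the paper takes $\zeta\subseteq M$, uses the approachable sequence $c\subseteq\xi^*$ together with surjections $\pi_\xi:\zeta\rightarrow\xi$ in $M$ and a cofinal sequence $\langle\zeta_i\mid i<\kappa\rangle$ in $\zeta$ to write $\xi^*$ as an increasing union of sets $X_i=\lkakko \eta \mid \pi_{\min(c\setminus(\eta+1))}\imageof\zeta_i\ni\eta\rkakko$; since $\cf{\xi^*}=\kappa^+>\kappa$, some $X_i\cap N(\xi^*)$ is unbounded, and this is the $x$ one uses. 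The point is that each $X_i\cap\gamma$ is an element of $M$ of size at most $\zeta_i<\zeta$, so by GCH and $\zeta\subseteq M$ \emph{every} subset of it --- in particular $X_i\cap\gamma\cap N(\xi^*)$ --- lies in $M$ even though $N(\xi^*)$ itself does not. That covering trick is what replaces your unproven synchronization step, and without it (or something equivalent) the proof is incomplete.
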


    \begin{proof}
        Let $V_X = \lambda$ and $T_X \subseteq E^\lambda _{\kappa^+}$ be the set defined in \Cref{defi:type graph}.

        \begin{itemize}
            \item Case 1: $\lambda$ is a limit cardinal, or $\lambda = \zeta^+$ for some cardinal $\zeta$ with $\cf{\zeta}\neq \kappa$. 
                
            It suffices to show that the assumptions in \Cref{lem:elementary substructure} holds. Take a sufficiently large regular cardinal $\theta$. By the cardinal arithmetic assumptions, we can find $M \prec \langle \mathcal{H}_\theta, \in, X, \kappa \rangle$ with $|M| < \lambda$, $M^{\kappa} \subseteq M$, and $M\cap \lambda \in T_X$. Let $\xi^* = M\cap \lambda$. Note that $\cf{\xi^*} = \kappa^+$ and $N(\xi^*)$ is cofinal in $\xi^*$ with $\otp{N(\xi^*)} = \kappa^+$. We show that $x = N(\xi^*)$ is a witness. For $\gamma < \xi^*$, $x \cap \gamma \in M$ holds since $M^{\kappa} \subseteq M$ and $|x \cap \gamma| \leq \kappa$. Thus, we are done.

            \item Case 2: $\lambda = \zeta^+$ for some $\zeta$ with $\cf{\zeta} = \kappa$.

            It suffices to show that the assumptions in \Cref{lem:elementary substructure} hold. In order to prove this, take a sufficiently large regular cardinal $\theta$ and find witnesses $M \prec \langle \mathcal{H}_\theta, \in, X, \kappa \rangle$ and $x\subseteq N(M\cap \lambda)$.

            Since $\lambda \in I[\lambda]$, we can take $\langle b_\xi\mid  \xi < \lambda \rangle$ and $C\subseteq \lambda$ such that

            \begin{itemize}
            \item $b_\xi$ is a bounded subset of $\lambda$ and 
            \item for all $\gamma \in C$, there is some $c_\gamma\subseteq\gamma$ such that 
                \begin{itemize}
                    \item $c_\gamma$ is cofinal in $\gamma$,
                    \item  $\otp{c_\gamma} = \cf{\gamma}$, and
                    \item for all $\delta < \gamma$, there is some $\eta < \gamma$ such that $c_\gamma \cap \delta = b_\eta$.
                \end{itemize}
            \end{itemize}

            Take an elementary substructure $M$ of $\langle \mathcal{H}_\theta, \in, X, \kappa, \zeta\rangle$ such that $|M| < \lambda$, $\langle b_\xi\mid  \xi < \lambda \rangle, C\in M$, $\zeta \subseteq M$, and $M\cap \lambda \in T_X\subseteq E^\lambda _{\kappa^+}$. Let $\xi^* = M\cap \lambda$. Note that $\cf{\xi^*} = \kappa^+$

            Since $C\in M$ and $C$ is a club, $C \cap \xi^*$ is cofinal in $\xi^*$, and hence $\xi^* \in C$. Since $\lambda \in I[\lambda]$, there is some cofinal subset $c\subseteq \xi^*$ such that $\otp{c} = \cf{\xi^*} = \kappa^+$ and for all $\delta < \xi^*$, there is some $\eta < \xi^*$ such that $c\cap \delta = b_\eta$. Since $\langle b_\xi \mid \xi < \lambda \rangle \in M$ and $\eta < \xi ^* = M \cap \lambda$, $c\cap \delta = b_\eta \in M$ for all $\delta < \xi^*$. Take such a cofinal subset $c$.

             Let $\langle \pi_\xi\mid \xi < \lambda\rangle$ be a sequence in $M$ such that $\pi_\xi$ is a surjection from $\zeta$ to $\xi$ for all $\xi < \lambda$. Take a sequence $\langle \zeta_i \mid i < \kappa \rangle \in M$ cofinal in $\zeta$ with $\zeta_0 > \kappa$. For every $i < \kappa$, define $X_i$ as follows:
            \[ X_i= \lkakko \eta < \xi^* \mid \pi_{\min (c\setminus (\eta + 1))}\imageof\zeta_i \ni \eta \rkakko \]
            Since $\cf{\xi^*} = \kappa^+$ and $\langle X_i \mid i < \kappa\rangle$ is an $\subseteq$-increasing sequence such that $\bigcup_{i < \kappa} X_i = \xi^*$, we can find an $i < \kappa$ such that $X_i \cap N(\xi^*)$ is unbounded in $\xi^*$. Take such an $i < \kappa$ and show that $x = X_i \cap N(\xi^*)$ is as desired. That is, we show that $x \cap \gamma \in M$ for all $\gamma < \xi^*$
            
            For every $\gamma < \xi^*$, $X_i\cap \gamma = \lkakko \eta < \gamma \mid \pi_{\min (c\setminus (\eta + 1))}\imageof\zeta_i \ni \eta \rkakko$ can be defined by using parameters in $M$ since $\min (c\setminus (\eta + 1)) \in M$, so $X_i \cap \gamma \in M$. Here, $|X_i \cap \gamma| \leq |X_i| \leq \zeta_i$. Therefore by using $\GCH$ and $\zeta \subseteq M$, $X_i\cap \gamma \cap N(\xi^*) \in M$ for all $\gamma < \xi^*$. 
        \end{itemize}
        In any case, we proved the negation of $\slist{X}{\kappa}$. This completes the proof.
    \end{proof}

    Combining \Cref{lem:case1,lem:case 2 with appr assumption}, we obtain the second one in \Cref{theo:main3}

    \begin{proof}[Proof of 2. in \Cref{theo:main3}]
        Let $X$ be an infinite graph. Suppose that 
        $\col{X}{\kappa^{+}}$ fails, and show the negation of $\slist{X}{\kappa}$. By the first part of \Cref{lem:case1}, $X$ has a subgraph $Y$ such that $Y$ is $\kappa^+$-type 1 or 2. If $Y$ is type 1, \Cref{lem:case1} implies the failure of $\slist{X}{\kappa}$. On the other hand, if $Y$ is type 2, \Cref{lem:case 2 with appr assumption} shows the negation of $\slist{X}{\kappa}$. This completes the proof.
    \end{proof}

\Cref{theo:main3} implies the following immediately.
\begin{cor}
    Assume $\GCH$. Let $\kappa$ be a regular cardinal. Then
        \begin{enumerate}
            \item $\slist{X}{\kappa}$ implies $\slist{X}{\kappa^{++}}$.
            \item If $\kappa \in I[\nu]$ for all cardinals $\nu$, then $\slist{X}{\kappa}$ implies $\slist{X}{\kappa^{+}}$.
        \end{enumerate}
\end{cor}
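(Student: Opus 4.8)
The plan is to derive both implications directly from \Cref{theo:main3}, combined with the horizontal chain of implications recorded in \Cref{fig:diagram of variants} read at a single regular cardinal. Indeed, for any regular cardinal $\mu$ one has $\col{X}{\mu}\rightarrow\lst{X}{\mu}\rightarrow\rlist{X}{\mu}\rightarrow\slist{X}{\mu}$: the first arrow is established in \cite{komjath2013list}, and the last two hold simply because $\Stat_\mu\subseteq[\mu]^\mu\subseteq[\Ord]^\mu$, so any list function into $\Stat_\mu$ is a fortiori a list function into $[\mu]^\mu$ (and similarly into $[\Ord]^\mu$), and a good choice function witnessing a property for a larger family of lists is in particular good for a smaller one.

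For the first part I would take $\slist{X}{\kappa}$ as hypothesis and apply the first conclusion of \Cref{theo:main3} to obtain $\col{X}{\kappa^{++}}$. Since $\kappa^{++}$ is a successor cardinal it is regular, so $\slist{-}{\kappa^{++}}$ is meaningful and the chain above applies with $\mu=\kappa^{++}$, yielding $\slist{X}{\kappa^{++}}$. The second part is identical with $\kappa^{+}$ in place of $\kappa^{++}$: under the approachability hypothesis, the second conclusion of \Cref{theo:main3} gives $\col{X}{\kappa^{+}}$, and then the chain with $\mu=\kappa^{+}$ (again $\kappa^{+}$ is regular, being a successor) gives $\slist{X}{\kappa^{+}}$.

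I do not expect any real obstacle: the corollary is essentially a repackaging of \Cref{theo:main3}. The only point that should not be skipped is confirming that $\slist$ is defined at the target cardinal — i.e.\ that $\kappa^{+}$ and $\kappa^{++}$ are regular, which is automatic since they are successor cardinals — before invoking the final arrow $\rlist{X}{\mu}\rightarrow\slist{X}{\mu}$ of the chain.
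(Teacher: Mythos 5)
Your proof is correct and matches the paper's intended argument: the paper states the corollary follows immediately from \Cref{theo:main3}, and the route is exactly yours — apply \Cref{theo:main3} to get $\col{X}{\kappa^{++}}$ (resp.\ $\col{X}{\kappa^{+}}$ under approachability), then run the chain $\col{X}{\mu}\rightarrow\lst{X}{\mu}\rightarrow\rlist{X}{\mu}\rightarrow\slist{X}{\mu}$ at the target successor cardinal, which is regular. Your remark about checking regularity of the target cardinal is the right (and only) point of care.
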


\section{Anti-monotonicity of coloring properties}
In this section, we show consistency results about anti-monotonicity of coloring properties. Our proof is based on the singular-cardinal approach of Raghavan–Shelah\cite{raghavan2020small} and Usuba\cite{usuba2025monotonicity}.
The aim of this section is to prove the following result.

\begin{theo}\label{theo:main4}
    Assume $\GCH$. Let $\kappa < \lambda$ be regular cardinals. Let $\mu> \lambda$ be a singular cardinal with cofinality $\kappa$. Define $\pposet = \Fn_{<\kappa}(\mu, 2)$. Let $G$ be a $\pposet$-generic filter over $V$. Then, in $V[G]$, $\rlist{K_{\mu, \mu}}{\kappa}$ holds but $\slist{K_{\mu, \mu}}{\lambda}$ fails.
\end{theo}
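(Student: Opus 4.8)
First I would pin down the behaviour of $\pposet$. Since $\kappa$ is regular, $\pposet=\Fn_{<\kappa}(\mu,2)$ is $<\kappa$-closed, so it adds no new $<\kappa$-sequences; moreover $\GCH$ gives $2^{<\kappa}=\kappa$, and a $\Delta$-system argument on any $\kappa^+$ conditions (pass to a $\kappa^+$-sized $\Delta$-subsystem; there are only $2^{<\kappa}=\kappa$ possibilities on the root, so two conditions agree there) shows $\pposet$ has the $\kappa^+$-cc. Hence $\pposet$ preserves all cardinals and cofinalities, so in $V[G]$ the objects $K_{\mu,\mu}$ and $\Stat_\lambda$ still mean what they should, with $\mu$ singular of cofinality $\kappa$ and $\lambda$ regular. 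I would also record that $2^\kappa=\mu^+$ in $V[G]$: the $\mu$ mutually generic ``columns'' of $G$ give $2^\kappa\ge\mu$, König's theorem gives $\cf{2^\kappa}>\kappa=\cf{\mu}$ hence $2^\kappa\ne\mu$, and a nice-name count ($|\pposet|=\mu$, antichains of size $\le\kappa$) gives $2^\kappa\le\mu^+$; consequently $2^\lambda=\mu^+$ as well, so $\GCH$ fails in $V[G]$ on $[\kappa,\mu)$ and the tools of \Cref{lem:case1,prop:nes for slist} are not available in naive form.

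\textbf{Failure of $\slist{K_{\mu,\mu}}{\lambda}$.} The plan is to apply \Cref{prop:nes for slist} with $\mu$ in the role of the graph parameter and $\lambda$ in the role of the list cardinal; since $\mu>\lambda$ this requires the bound $\denincl{\Stat_\lambda}\le\mu$ in $V[G]$. I would obtain this by showing that a $\subseteq$-dense family of $\Stat_\lambda^{V}$ remains dense in $V[G]$: under $\GCH$ such a family can be chosen of size $(2^\lambda)^V=\lambda^+\le\mu$, so its size is harmless, and the content is that every $V[G]$-stationary $S\subseteq\lambda$ contains a $V$-stationary set. Here is where the structure of $\pposet$ enters: $\pposet$ is simultaneously $<\kappa$-closed and $\kappa^+$-cc with $\kappa<\lambda$, so the $\kappa^+$-cc implies $\pposet$ adds no fresh subsets of $\lambda$ (as $\cf{\lambda}=\lambda\ge\kappa^+$) and preserves stationarity of ground-model sets, while $<\kappa$-closure fixes the cofinality structure below points of cofinality $<\kappa$; a standard elementary-submodel/reflection argument then forces a new stationary $S$ to agree with a ground-model set on a club, hence to contain a $V$-stationary subset. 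With $\denincl{\Stat_\lambda}\le\mu$ in hand, \Cref{prop:nes for slist} yields $\neg\,\slist{K_{\mu,\mu}}{\lambda}$.

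\textbf{$\rlist{K_{\mu,\mu}}{\kappa}$ holds --- the main obstacle.} Fix in $V[G]$ a list $L\colon V_{K_{\mu,\mu}}=A\sqcup B\to[\kappa]^\kappa$. As $K_{\mu,\mu}$ is complete bipartite, a choice function $c$ of $L$ is good iff $c\imageof A\cap c\imageof B=\emptyset$, and since $c\imageof A$ may be taken to be any subset of $\kappa$ meeting every $L(a)$ (dually for $B$), the entire problem reduces to producing a single $Z\subseteq\kappa$ with $Z\cap L(a)\ne\emptyset$ for all $a\in A$ and $L(b)\nsubseteq Z$ for all $b\in B$ --- a ``one-sided splitter'' of the $\le\mu$-sized family $\{L(v):v\in V_{K_{\mu,\mu}}\}\subseteq[\kappa]^\kappa$. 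In $V$ such a $Z$ need not exist (indeed $\rlist{K_{\mu,\mu}}{\kappa}$ fails there, since $[\kappa]^\kappa$ has size $\kappa^+<\mu$ so that $K_{\mu,\mu}$ contains $K_{2^\kappa,2^\kappa}$, to which \Cref{theo:chr neq slist} applies); the whole point is that in $V[G]$ we have $2^\kappa=\mu^+>\mu$, so $[\kappa]^\kappa$ is too large to be ``covered by $\mu$ many cones'', and one can always find $Z$. To produce $Z$ I would follow the singular-cardinal argument of Raghavan--Shelah and Usuba: use the filtration $\pposet=\bigcup_{i<\kappa}\pposet_i$ with $\pposet_i=\Fn_{<\kappa}(\mu_i,2)$ for a cofinal $\langle\mu_i:i<\kappa\rangle$, so each $\pposet_i$ has size $<\mu$ and $G=\bigcup_i(G\cap\pposet_i)$; analyze the name $\dot L$, noting that by the $\kappa^+$-cc each value $\dot L(v)$ is added by a set of $\le\kappa$ coordinates; then build $\dot Z$ (or equivalently carry out the reflection inside $V[G]$, passing to an $N\prec H_\chi^{V[G]}$ of size $\mu$ with $L\in N$, ${}^{<\kappa}N\subseteq N$ and $\cf{N\cap\mu}=\kappa$) so that $Z$ is read off a suitably ``fresh'' portion of the Cohen generic, using that a Cohen-generic subset of $\kappa$ splits every $\kappa$-sized set in the model over which it is generic.

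The hard part is exactly this last step: the name for $L$ need \emph{not} be localized to any bounded segment of $\pposet$, so one must argue --- and this is precisely where $\cf{\mu}=\kappa$ together with $2^{<\kappa}=\kappa$ is used --- that the data determining $L$ can nonetheless be caught in a $\kappa$-stage process and a splitter extracted. Granting the theorem, the four displayed statements in part 3 of the main result follow from the implication diagram, via $\rlist{K_{\mu,\mu}}{\kappa}\to\slist{K_{\mu,\mu}}{\kappa}$ and $\neg\,\slist{K_{\mu,\mu}}{\lambda}\to\neg\,\rlist{K_{\mu,\mu}}{\lambda}$.
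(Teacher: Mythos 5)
There is a genuine gap in your argument for the failure of $\slist{K_{\mu,\mu}}{\lambda}$. You claim that a $\subseteq$-dense subfamily of $\Stat_\lambda^{V}$ stays dense in $V[G]$, i.e.\ that every subset of $\lambda$ that is stationary in $V[G]$ contains a $V$-stationary subset. That is false for this forcing. Fix in $V$ an injection of $\lambda$ into $\mu$ and let $Z=\lkakko \alpha<\lambda \mid F(\alpha)=1\rkakko$, where $F$ is the generic function read on those $\lambda$ coordinates. For every unbounded $T\subseteq\lambda$ with $T\in V$, a density argument gives $Z\cap T\neq\emptyset$ and $(\lambda\setminus Z)\cap T\neq\emptyset$. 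Since $\pposet$ is ${<}\kappa^+$-cc and $\lambda\geq\kappa^+$ is regular, every club of $\lambda$ in $V[G]$ contains one from $V$, so $Z$ is stationary in $V[G]$; yet $Z$ contains no $V$-stationary set, because its complement meets every $V$-stationary set. (Your appeal to ``no fresh subsets of $\lambda$'' does not help: $Z$ is not fresh, and freshness is not the relevant notion.) The bound $\denincl{\Stat_\lambda}\leq\mu$ is nevertheless correct, but the paper obtains it differently: writing $\pposet=\bigcup_{i<\kappa}\Fn_{<\kappa}(\mu_i,2)$, it decomposes a nice name $\dot{S}$ for a stationary set as $\bigcup_{i<\kappa}\dot{S_i}$ with $\dot{S_i}$ a $\Fn_{<\kappa}(\mu_i,2)$-name; since $\kappa<\lambda$, some $\dot{S_i}$ is forced stationary below some condition, so $\bigcup_{i<\kappa}(\mathcal{P}(\lambda))^{V[G\upharpoonright\mu_i]}\cap\Stat_\lambda^{V[G]}$, of size $\mu$ by counting nice names, is dense in $(\Stat_\lambda,\subseteq)$. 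This is exactly where $\cf{\mu}=\kappa<\lambda$ enters, and it is the step your route skips.

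For $\rlist{K_{\mu,\mu}}{\kappa}$ your reduction to producing a single set splitting all $\mu$ many lists is correct and is in the spirit of the paper, which isolates it as the statement $\mu<\mathfrak{r}_\kappa$ via \Cref{prop:characterization of rlist}. But you explicitly leave the key construction open (``the hard part is exactly this last step''), so this half is not actually proved. The paper's construction is short: writing $\dot{X_\alpha}=\bigcup_{j<\kappa}\{\check{j}\}\times A^\alpha_j$, the coordinates occurring in the supports of conditions in $\lkakko A^\alpha_j \mid j<\kappa,\ \alpha<\mu_i\rkakko$ form a set of size at most $\mu_i\cdot\kappa^+<\mu$, so one may pick $\gamma_i$ outside it and set $\dot{X}=\lkakko(\check{i},\{(\gamma_i,1)\})\mid i<\kappa\rkakko$; given any $\alpha<\mu$, any condition, and any $i$, one finds $j\geq i$ with $\mu_j>\alpha$ and $\gamma_j$ not in the support of the condition, and then forces $j$ into either $\dot{X_\alpha}\cap\dot{X}$ or $\dot{X_\alpha}\setminus\dot{X}$. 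You should carry out this step rather than gesture at it; as written, both halves of your proof have holes, even though the overall skeleton (reduce to $\denincl{\Stat_\lambda}\leq\mu$ and to a splitting set) matches the paper's.
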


We first introduce a generalization of the reaping number $\mathfrak{r}$.

\begin{defi}
    Let $\kappa$ be a regular cardinal. 
    \begin{itemize}
        \item For $A, B \in [\kappa]^\kappa$, we say that $A$ $\kappa$-splits $B$ if $|B \cap A|=|B\setminus A| =\kappa$.
        \item For $\mathcal{R} \subseteq [\kappa]^\kappa$, we say that $\mathcal{R}$ is a $\kappa$-reaping family if for any $A \in [\kappa]^\kappa$ there is some $B \in \mathcal{R}$ such that $A$ does not $\kappa$-split $B$. 
        \item Define 
        \[ \mathfrak{r}_{\kappa} = \min\lkakko |\mathcal{R}| \mid \text{$\mathcal{R}$ is a $\kappa$-reaping family} \rkakko . \]
    \end{itemize}
\end{defi}

For example, $\mathfrak{r}$ is coincides with $\mathfrak{r}_{\omega}$. The next proposition states that $\lambda < \mathfrak{r}_{\kappa}$ is characterized precisely by $\rlist{K_{\lambda, \lambda}}{\kappa}$.

\begin{prop}\label{prop:characterization of rlist}
    Let $\lambda$ be a cardinal, and $X=K_{\lambda, \lambda}$. For a regular cardinal $\kappa\geq \lambda$, the following are equivalent.
    \begin{enumerate}
        \item $\lambda < \mathfrak{r}_{\kappa}$.
        \item $\rlist{X}{\kappa}$.
    \end{enumerate}
\end{prop}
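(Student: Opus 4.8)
The plan is to prove the equivalence by establishing the contrapositive in each direction, working directly with list functions on the complete bipartite graph $K_{\lambda,\lambda}$ with parts $A$ and $B$. The key observation is that a list function $L : A \sqcup B \to [\kappa]^\kappa$ admits no good choice function precisely when, for every choice $c \upharpoonright A$, the resulting set $c \imageof A$ meets $L(b)$ for some $b \in B$ in a way that cannot be avoided --- and this is tightly connected to the splitting/reaping dichotomy. Roughly, $L \imageof B$ being a $\kappa$-reaping family is what lets the adversary defeat any coloring, while $|B| < \mathfrak r_\kappa$ gives us enough room on the $A$-side to steer $c \imageof A$ away from being ``captured'' by any member of $L \imageof B$.

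First I would prove $(2) \Rightarrow (1)$ by contraposition: assume $\lambda \geq \mathfrak r_\kappa$, so there is a $\kappa$-reaping family $\mathcal R \subseteq [\kappa]^\kappa$ with $|\mathcal R| = \mathfrak r_\kappa \leq \lambda$. Identify $A$ with $\kappa$ (using $\lambda \le \kappa$ --- wait, here $\kappa \ge \lambda$, so identify a subset; more carefully, since $\kappa \ge \lambda$ we can only use $|A| = \lambda \le \kappa$, so I would instead set $L \upharpoonright A$ to enumerate the sets $\kappa \setminus v$ as in the proof of \Cref{theo:slist neq rlist}, or more robustly, arrange that $c \imageof A$ can be forced to realize an arbitrary $[\kappa]^\kappa$ set). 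Actually the clean route: choose $L$ on $A$ so that the possible values of $c \imageof A$ range over a cofinal-enough family, and choose $L$ on $B$ to list a reaping family $\mathcal R$ together with their complements. Then for any $c$, letting $W = c\imageof A \in [\kappa]^\kappa$, reaping gives some $R \in \mathcal R$ with $R \subseteq^* W$ or $R \cap W$ small --- and by also listing complements $\kappa \setminus R$ we force $W$ to contain some listed set entirely (after adjusting for the bounded error). Then the $b \in B$ with $L(b)$ equal to that set has $c(b) \in c\imageof A$, killing goodness. The technical nuisance here is the ``$\subseteq^*$'' versus ``$\subseteq$'' gap: reaping only gives almost-containment, so I would either thin out the family to absorb bounded sets or arrange $A$ to have $\le \kappa$ vertices realizing every bounded modification.

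For $(1) \Rightarrow (2)$, assume $\lambda < \mathfrak r_\kappa$ and take any $L : A \sqcup B \to [\kappa]^\kappa$; I must build a good $c$. The idea is to first pick $c \upharpoonright A$ cleverly so that $c \imageof A$ is not ``trapped'' by any $L(b)$, then fill in $c \upharpoonright B$ freely. Since $|A| = \lambda < \mathfrak r_\kappa$, the family $\{ L(a) \mid a \in A \}$ is not a $\kappa$-reaping family, so there is some $D \in [\kappa]^\kappa$ that $\kappa$-splits every $L(a)$; using this $D$ I can choose $c(a) \in L(a) \cap D$ for every $a \in A$, guaranteeing $c \imageof A \subseteq D$. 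Simultaneously I want $c \imageof A$ to omit a large set: because $D$ also $\kappa$-splits each $L(a)$, I can in fact choose $c(a) \in L(a) \cap D$ while the ``unused'' part $L(a) \setminus D$ is large, but what I really need is a single $E \in [\kappa]^\kappa$ disjoint from $c\imageof A$ that meets every $L(b)$. This is where $|B| < \mathfrak r_\kappa$ enters a second time: I would iterate or take $D$ to be the complement of a reaping-witness so that $\kappa \setminus D$ still meets every $L(b)$ (since $\{L(b) : b\in B\}$, having size $<\mathfrak r_\kappa$, is not reaping, there is a set splitting all of them, hence a large set meeting all of them that can be made disjoint from $c\imageof A$). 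Then set $c(b) \in L(b) \setminus c\imageof A$ for each $b$, which is nonempty, and $c$ is good.

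I expect the main obstacle to be reconciling the two appeals to $\mathfrak r_\kappa$ --- controlling $c\imageof A$ to be small (inside some $D$) while also ensuring its complement still meets all of $L\imageof B$ --- in a single construction rather than needing two independent splitting sets. The cleanest fix is probably to note $|A \cup B| = \lambda < \mathfrak r_\kappa$ and apply non-reaping once to the combined family $\{L(v) : v \in V_X\}$: get $D$ that $\kappa$-splits every $L(v)$, choose $c(a) \in L(a) \cap D$ for $a \in A$ so $c\imageof A \subseteq D$, then since $D$ $\kappa$-splits $L(b)$ we have $|L(b) \setminus D| = \kappa$, so $L(b) \setminus D$ is disjoint from $c\imageof A \subseteq D$ and nonempty, letting us pick $c(b) \in L(b) \setminus D$. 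That yields a good coloring in one stroke, and symmetrically the failure direction just needs a reaping family of size $\le \lambda$ placed on $B$ together with a device on $A$ forcing $c\imageof A$ to realize (a bounded modification of) a prescribed set, handled as in \Cref{theo:slist neq rlist}. I would also double-check the edge case $\lambda$ finite or $\lambda = \kappa$ separately, though these should be routine.
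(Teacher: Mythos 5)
Your $(1)\Rightarrow(2)$ direction, in the final form you settle on, is exactly the paper's argument: since $|V_X|=\lambda<\mathfrak r_\kappa$, the family $\lkakko L(v)\mid v\in V_X\rkakko$ is not $\kappa$-reaping, so a single $D$ $\kappa$-splits every list; putting $c(a)\in L(a)\cap D$ and $c(b)\in L(b)\setminus D$ makes $c\imageof A$ and $c\imageof B$ disjoint, hence $c$ good. That half is complete.

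The $(2)\Rightarrow(1)$ direction has a genuine gap. Writing $W=c\imageof A$, the reaping property only gives some $R\in\mathcal R$ that $W$ fails to $\kappa$-split, i.e.\ \emph{either} $|R\setminus W|<\kappa$ \emph{or} $|R\cap W|<\kappa$. Your plan (list the tails of each $R$ and of each complement $\kappa\setminus R$ on $B$, then find $b$ with $L(b)\subseteq W$) handles only the first alternative. In the second alternative you get $W\subseteq^*\kappa\setminus R$, which is containment in the wrong direction: it does not give $\kappa\setminus R\subseteq^* W$, so no listed set need sit inside $W$ and your argument produces no collision. Nor can the ``device on $A$'' borrowed from \Cref{theo:slist neq rlist} rescue this: with only $\lambda\le\kappa$ vertices in $A$ and lists of full size $\kappa$, you cannot force $c\imageof A$ to contain (a tail of) any prescribed set --- the $L(v)=\kappa\setminus v$ trick only forces $c\imageof A$ to be unbounded, and the ``$B=[\kappa]^\kappa$'' trick of \Cref{theo:slist neq rlist} needs $2^\kappa$ vertices. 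The paper closes this gap by symmetrizing: it puts the \emph{same} lists $\lkakko R\setminus i\mid R\in\mathcal R,\ i<\kappa\rkakko$ on both $A$ and $B$, so that $S_0=c\imageof A$ and $S_1=c\imageof B$ each meet every $R\in\mathcal R$ unboundedly. That kills the bad alternative ($|R\cap S_0|<\kappa$ is now impossible), leaving $R\subseteq^* S_0$ for some $R$, whence $R\cap S_1$ (of size $\kappa$) forces $S_0\cap S_1\neq\emptyset$ and a monochromatic edge. You should adopt this symmetric list assignment; your version as written does not yield the failure of $\rlist{X}{\kappa}$.

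(Your side worry about identifying $A$ with $\kappa$ is well taken: realizing $\kappa$ many tails as the image of $A$ needs $|A|\ge\kappa$, which sits awkwardly with the stated hypothesis $\kappa\ge\lambda$; in the paper's application one has $\lambda>\kappa$, so this is an issue with the statement's hypothesis rather than with the intended argument.)
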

\begin{proof}
    Let $A, B$ be the two parts of $V_X$ with $|A| = |B| = \lambda$.
    
    \noindent$(1\rightarrow 2)$ Let $L:A \sqcup B \rightarrow [\kappa]^\kappa$ be a list function. It suffices to construct a choice function $c$ of $L$ such that $c$ is good. Let $\mathcal{F} = \ran(L) \subseteq [\kappa]^\kappa$. Then $|\mathcal{F}| \leq \lambda < \mathfrak{r}_{\kappa}$. By the definition of $\mathfrak{r}_{\kappa}$, there is some $S \in [\kappa]^\kappa$ such that $S$ $\kappa$-splits all elements of $\mathcal{F}$. In particular, for all $v \in V_X$, $L(v)\cap S$ and $L(v) \setminus S$ are not empty. Define $c$ as follows:
    \[ c(v) \in \begin{cases}
        L(v) \cap S &\text{if $v \in A$, and} \\
        L(v) \setminus S &\text{if $v \in B$.} 
    \end{cases} \]
    Thus $c$ is a good coloring.

    \noindent$(2\rightarrow 1)$ We show the contrapositive, that is, we suppose $\lambda \geq \mathfrak{r}_\kappa$ and show the negation of $\rlist{X}{\kappa}$. Let $\mathcal{R}$ be a reaping family with $|\mathcal{R}| = \mathfrak{r}_{\kappa}$. We show that there is a list function $L: A\sqcup B \rightarrow [\kappa]^\kappa$ such that no choice function $c$ of $L$ is good. Take a list function $L$ satisfying $L\imageof A = L\imageof B = \lkakko X \setminus i \mid X \in \mathcal{R}, i < \kappa \rkakko$. Let $c$ be an arbitrary choice function of $L$. Let $S_0 = c\imageof A$ and $S_1 = c\imageof B$. By the definition of $L$, $|S_n \cap S| = \kappa$ for every $S \in \mathcal{R}$ and $n < 2$. Since $\mathcal{R}$ is a reaping family, we can take $S \in \mathcal{R}$ such that $|S \setminus S_0|< \kappa$. Since $|S \cap S_1| = \kappa$ and $S\cap S_1 \subseteq (S_0 \cap S_1) \cup (S\setminus S_0) \cap S_1$, $S_0 \cap S_1 \neq \emptyset$. Let $\gamma \in S_0 \cap S_1, v \in A,$ and $w \in B$ such that $c(v) =c(w) = \gamma$. Since $X$ is the complete bipartite graph, $\lkakko v, w \rkakko \in E_X$. Therefore $c$ is not good.
\end{proof}

\begin{proof}[Proof of \Cref{theo:main4}]
    Take a continuous increasing sequence $\langle \mu_i \mid i < \kappa \rangle$ cofinal in $\mu$. By \Cref{prop:characterization of rlist,prop:nes for slist}, it suffices to show that $V[G]\vDash$``$ \denincl{\Stat_\lambda} \leq \mu < \mathfrak{r}_\kappa$". Note that $\pposet$ is ${<}\kappa^+$-cc.
    
    \begin{cla}
        In $V[G]$, $\denincl{\Stat_\lambda} \leq \mu$ holds.
    \end{cla}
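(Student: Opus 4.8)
The plan is to produce, working in $V[G]$, a dense subfamily $\mathcal{S}$ of $(\Stat_\lambda,\subseteq)$ of size at most $\mu$, assembled from the intermediate models given by bounded pieces of $\pposet$. I would fix in $V$ a strictly increasing sequence $\langle\mu_i\mid i<\kappa\rangle$ of regular cardinals cofinal in $\mu$ with $\mu_0>\lambda$, set $\pposet_i=\Fn_{<\kappa}(\mu_i,2)$ (a complete subforcing of $\pposet$) and $W_i=V[G\cap\pposet_i]$, and take
\[
\mathcal{S}=\{\,S\subseteq\lambda\mid S\text{ is stationary in }V[G]\text{ and }S\in\Power(\lambda)^{W_i}\text{ for some }i<\kappa\,\}.
\]
Since $\pposet_i$ has size $\mu_i$ and (like $\pposet$) is $\kappa^+$-cc, a nice $\pposet_i$-name for a subset of $\lambda$ is a function from $\lambda$ into $[\pposet_i]^{\le\kappa}$, so $\GCH$ in $V$ together with $\cf{\mu_i}=\mu_i>\kappa,\lambda$ gives $|\Power(\lambda)^{W_i}|\le\mu_i^{\lambda}=\mu_i$; hence $|\mathcal{S}|\le\sum_{i<\kappa}\mu_i=\mu$. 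So the real task is to show $\mathcal{S}$ is dense, i.e.\ that every $T\in\Stat_\lambda^{V[G]}$ has a stationary subset lying in $\Power(\lambda)^{W_i}$ for some $i<\kappa$.

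For this, fix a stationary $T\subseteq\lambda$ in $V[G]$ and a nice $\pposet$-name $\dot T$ for it; by $\kappa^+$-cc the support $e\subseteq\mu$ of $\dot T$ has size $\le\lambda$. The key point is that every condition of $\pposet$ has domain of size $<\kappa=\cf{\mu}$ and is therefore bounded in $\mu$, so $\Fn_{<\kappa}(e,2)=\bigcup_{i<\kappa}\Fn_{<\kappa}(e_i,2)$ where $e_i=e\cap\mu_i$. Setting
\[
T_i=\{\,\gamma<\lambda\mid(\exists p\in G\cap\Fn_{<\kappa}(e_i,2))\ p\Vdash\check\gamma\in\dot T\,\},
\]
one checks routinely that $T_i\in W_i$, that $i<j$ implies $T_i\subseteq T_j\subseteq T$, and that $\bigcup_{i<\kappa}T_i=T$ (any condition forcing $\gamma$ into $\dot T$ already lies in some $\Fn_{<\kappa}(e_i,2)$).

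Finally, since $\pposet$ is $<\kappa$-closed and $\kappa^+$-cc it preserves cofinalities, so $\lambda$ is still regular and $\mathrm{NS}_\lambda$ is still $\lambda$-complete in $V[G]$; as $\kappa<\lambda$, a $\kappa$-indexed union of subsets of $T$ that equals the stationary set $T$ cannot have all terms nonstationary, so some $T_{i^*}$ is stationary in $V[G]$. That $T_{i^*}$ is a subset of $T$, lies in $\Power(\lambda)^{W_{i^*}}$, and is stationary in $V[G]$, hence belongs to $\mathcal{S}$, which witnesses density. The step I expect to be the main obstacle is exactly this last reduction — passing from an arbitrary stationary $T$, whose name may be supported cofinally in $\mu$, down to a stationary subset supported below a single $\mu_i$ — and it is precisely here that both hypotheses $\cf{\mu}=\kappa$ (so the pieces $\Fn_{<\kappa}(e_i,2)$ exhaust $\Fn_{<\kappa}(e,2)$) and $\kappa<\lambda$ (so a $\kappa$-union cannot destroy stationarity) are used. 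The remaining ingredients — the $\kappa^+$-cc and $<\kappa$-closure of $\pposet$ and its restrictions, the $\GCH$ arithmetic bounding $|\Power(\lambda)^{W_i}|$, and the fact that stationarity computed in $W_i$ and in $V[G]$ agree in the directions needed — are routine.
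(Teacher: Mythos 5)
Your proof is correct and follows essentially the same route as the paper: count nice names over the bounded pieces $\Fn_{<\kappa}(\mu_i,2)$ using $\GCH$ and the $\kappa^+$-cc, decompose a given stationary set (via its name) into the $\kappa$-indexed union of its restrictions to those pieces, and use $\kappa<\lambda$ together with the $\lambda$-completeness of the nonstationary ideal to extract a stationary piece lying in some intermediate model. The only difference is presentational — you argue in $V[G]$ with the generic filter, while the paper phrases the same decomposition in terms of names and the forcing relation.
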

    \begin{cproof}
        By counting nice names, we can show that $|(\mathcal{P}(\lambda))^{V[G\upharpoonright\mu_i]}| \leq \mu_i$ holds. Thus it suffices to show that $\bigcup_{i < \kappa} (\mathcal{P}(\lambda))^{V[G\upharpoonright\mu_i]} \cap \Stat_{\lambda}^{V[G]}$ is dense in $(\Stat_\lambda, \subseteq)$ in $V[G]$.
        
        Let $\dot{S} \in V^\pposet$ be a nice name for a subset of $\lambda$ and $p \in \pposet$ such that $p\Vdash\text{``}\dot{S} \text{ is stationary in $\lambda$"}$. We need to find a condition $q \leq p$, $i < \kappa$, and a name $\dot{T} \in V^{\Fn_{<\kappa}(\mu_i, 2)}$ such that $q \Vdash \text{``} \dot{T} \text{ is stationary and } \dot{T} \subseteq \dot{S} \text{"}$. Let $\dot{S} = \bigcup_{\xi < \lambda} \lkakko \check{\xi}\rkakko \times A_\xi$ where $A_\xi$ is an antichain in $\pposet$. For $ i < \kappa$, define $\dot{S_i} = \bigcup_{\xi < \lambda} \lkakko \check{\xi}\rkakko \times (A_\xi \cap \Fn_{<\kappa}(\mu_i, 2)) \in V^{\Fn_{<\kappa}(\mu_i, 2)}$. Since $p\Vdash \text{``}\dot{S} = \bigcup_{i < \kappa} \dot{S_i} \text{"}$ and $\kappa < \lambda$, there are some $q\leq p$ and $i < \kappa$ such that $q\Vdash\text{``}\dot{S_i} \text{ is stationary in $\lambda$"}$. $\dot{T} = \dot{S_i}$ is as desired. \claimqed
    \end{cproof}

    \begin{cla}
        In $V[G]$, $\mu < \mathfrak{r}_\kappa$ holds.
    \end{cla}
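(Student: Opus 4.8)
The plan is to establish, in $V[G]$, the equivalent statement: every family $\mathcal{R}\subseteq[\kappa]^\kappa$ with $|\mathcal{R}|\le\mu$ fails to be a $\kappa$-reaping family, i.e.\ some single $A\in[\kappa]^\kappa$ $\kappa$-splits every member of $\mathcal{R}$. Recall that $\pposet$ is $<\kappa$-closed and (as noted) $\kappa^+$-cc, hence preserves cardinals and cofinalities; in particular $\kappa$ stays regular and $\cf{\mu}=\kappa$ in $V[G]$, and I may write $g=\bigcup G\colon\mu\to 2$ for the generic function. Fix such an $\mathcal{R}$. Working in $V$, pick $p_0\in G$ and nice names $\langle\dot B_\eta\mid\eta<\mu\rangle$ such that $p_0$ forces that $\{\dot B_\eta\mid\eta<\mu\}$ enumerates $\mathcal{R}$ (with repetitions if $|\mathcal{R}|<\mu$) and each $\dot B_\eta\in[\kappa]^\kappa$. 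Let $J_\eta\subseteq\mu$ collect the coordinates occurring in conditions appearing in $\dot B_\eta$; since each of the $\kappa$ antichains inside $\dot B_\eta$ has size $\le\kappa$ by $\kappa^+$-cc and each of its conditions has domain of size $<\kappa$, we get $|J_\eta|\le\kappa$, and $\dot B_\eta$ is really an $\Fn_{<\kappa}(J_\eta,2)$-name. Using the fixed cofinal sequence $\langle\mu_i\mid i<\kappa\rangle$, put $I_i:=\bigcup_{\eta<\mu_i}J_\eta$, so $\langle I_i\mid i<\kappa\rangle$ is $\subseteq$-increasing with $|I_i|\le\max(|\mu_i|,\kappa)<\mu$; then pick pairwise distinct ordinals $\gamma_i\in\mu\setminus I_i$ ($i<\kappa$). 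Finally, for $\eta<\mu$ let $i_\eta:=\min\{i<\kappa\mid\eta<\mu_i\}$; then $J_\eta\subseteq I_{i_\eta}$, so $B_\eta:=\dot B_\eta^G$ lies in $M_\eta:=V[G\restriction I_{i_\eta}]$, while $\gamma_i\notin I_{i_\eta}$ for all $i\ge i_\eta$ (because $I_{i_\eta}\subseteq I_i$).

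Put $A:=\{i<\kappa\mid g(\gamma_i)=1\}$. I claim $A\in[\kappa]^\kappa$ and $A$ $\kappa$-splits every $B_\eta$, which proves the claim. Both parts follow from the elementary fact that for $\Gamma\subseteq\mu$ of size $\kappa$, the generic satisfies $|\{\alpha\in\Gamma\mid g(\alpha)=1\}|=|\{\alpha\in\Gamma\mid g(\alpha)=0\}|=\kappa$: any condition has domain of size $<\kappa$, so it touches $<\kappa$ coordinates of $\Gamma$ and can be extended to decide one further coordinate of $\Gamma$ either way, and $\kappa$ is regular. Applying this in $V$ with $\Gamma=\{\gamma_i\mid i<\kappa\}$ yields $A\in[\kappa]^\kappa$. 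For the splitting, fix $\eta$ and let $\Gamma_\eta:=\{\gamma_i\mid i\in B_\eta\setminus i_\eta\}$; since $B_\eta\in M_\eta$ and $(i\mapsto\gamma_i)\in V$, also $\Gamma_\eta\in M_\eta$, and $\Gamma_\eta\cap I_{i_\eta}=\emptyset$, so by the product lemma $g\restriction\Gamma_\eta$ is $\Fn_{<\kappa}(\Gamma_\eta,2)$-generic over $M_\eta$. As $|\Gamma_\eta|=|B_\eta\setminus i_\eta|=\kappa$ (using $|B_\eta|=\kappa$ and $i_\eta<\kappa$), the fact above applied inside $M_\eta$ gives that $\{i\in B_\eta\setminus i_\eta\mid g(\gamma_i)=1\}$ and $\{i\in B_\eta\setminus i_\eta\mid g(\gamma_i)=0\}$ both have size $\kappa$; hence $|A\cap B_\eta|=|B_\eta\setminus A|=\kappa$, so $A$ $\kappa$-splits $B_\eta$. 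As $\mathcal{R}$ was arbitrary, $\mathfrak{r}_\kappa>\mu$ in $V[G]$.

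The real obstacle, and the reason the ``single fresh Cohen subset'' trick from the usual Cohen-model computation of $\mathfrak{r}$ does not apply directly, is that the total support $\bigcup_{\eta<\mu}J_\eta$ can be all of $\mu$, so no single coordinate of $\pposet$ is fresh over a model containing all of $\mathcal{R}$. This is overcome by exploiting $\cf{\mu}=\kappa$ twice: once to write $\mathcal{R}$ as the increasing union of the pieces $\{B_\eta\mid\eta<\mu_i\}$, each already present in $V[G\restriction I_i]$ with $|I_i|<\mu$; and, more essentially, in the observation that an individual $B_\eta$ --- being of size $\kappa=\cf{\mu}$ --- meets the tail $\{i\mid i\ge i_\eta\}$ of indices in a set of full size $\kappa$, so that the bits $g(\gamma_i)$ for $i\ge i_\eta$, which are jointly generic over $M_\eta\ni B_\eta$, already do the splitting. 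The remaining points are routine: the cardinal arithmetic of $\pposet$ ($<\kappa$-closed, $\kappa^+$-cc) and the support bound $|J_\eta|\le\kappa$.
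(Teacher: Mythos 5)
Your proof is correct and follows essentially the same route as the paper's: both exploit $\cf{\mu}=\kappa$ to pick coordinates $\gamma_i\in\mu$ avoiding the supports occurring in the nice names for the first $\mu_i$-many sets and take the splitting set to be $\{i<\kappa\mid g(\gamma_i)=1\}$. The only difference is presentational: you verify the splitting via the product lemma and genericity of $g\upharpoonright\Gamma_\eta$ over the intermediate models $V[G\upharpoonright I_{i_\eta}]$, whereas the paper runs the corresponding density argument directly with conditions $q_0,q_1$ deciding $g(\gamma_j)$ both ways.
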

    \begin{cproof}
        Let $p \in \pposet$ and $\lkakko \dot{X_\alpha} \mid \alpha < \mu \rkakko \subseteq V^{\pposet}$ be a family of nice names for elements of $[\kappa]^\kappa$. We need to find a name $\dot{X}$ for an element of $[\kappa]^\kappa$ such that $p \Vdash\text{``}\dot{X} \text{ $\kappa$-splits all elements of $\lkakko \dot{X_\alpha} \mid \alpha < \mu \rkakko$"}$. Let $\dot{X_\alpha} = \bigcup_{i < \kappa} \lkakko \check{i}\rkakko \times A^\alpha_i$ where $A^\alpha_i$ is an antichain in $\pposet$. For $i < \kappa$, define $S_i = \bigcup\lkakko\mathrm{supp}(q) \mid q \in A_j^\alpha, j < \kappa , \alpha < \mu_i\rkakko$. Since $\pposet$ is ${<}\kappa^+$-cc, $|S_i| \leq \mu_i \cdot \kappa^+< \mu$. Thus we can take $\gamma_i<\mu$ such that $\gamma_i \notin S_i$ for $i < \kappa$. Let $\dot{X} = \lkakko (\check{i}, \{(\gamma_i,1)\}) \mid i < \kappa \rkakko\in V^\pposet$. We show that $\dot{X}$ is as desired. 

    Take $p' \leq p$, $\alpha < \mu$, and $i < \kappa$. It suffices to show that there is some $q_0, q_1 \leq p'$ and $j \geq i$ such that $q_0 \Vdash\text{``} \check{j} \in \dot{X}_\alpha\setminus \dot{X}\text{"}$, and $q_1 \Vdash\text{``}\check{j} \in \dot{X}_\alpha\cap \dot{X} \text{"}$. We can take $j\geq i$ such that $\mu_j > \alpha$, $p' \not \Vdash j \notin \dot{X}_\alpha$, and $\gamma_{j} \notin \mathrm{supp}(p')$ since $p \Vdash\dot{X}_\alpha \in [\kappa]^\kappa$ and $|\mathrm{supp}(p')| < \kappa$. Then there is some $a \in A_j^\alpha$ such that $a\parallel p'$. Since $\gamma_j \notin \mathrm{supp}(p') \cup S_i$, both $\{(\gamma_j, 0)\}$ and $\{(\gamma_j, 1)\}$ are compatible with $a$ and $p'$. Here
    \begin{align*}
        q_0 &= a \cup p' \cup \{(\gamma_j, 0)\}\Vdash \check{j} \in \dot{X_\alpha} \setminus \dot{X}, \text{ and} \\
        q_1 &= a \cup p' \cup \{(\gamma_j, 1)\}q_1 \Vdash \check{j} \in \dot{X_\alpha} \cap \dot{X}.
    \end{align*}
    Therefore, $p \Vdash\text{``}\dot{X} \text{ $\kappa$-splits $ \dot{X_\alpha}$"}$ for all $\alpha < \mu$. \claimqed
    \end{cproof}
    This completes the proof.
\end{proof}

Since $\rlist{X}{\nu}$ implies $\slist{X}{\nu}$ for any graph $X$ and regular cardinal $\nu$, \Cref{theo:main4} implies the following directly.

\begin{cor}
    Let $V[G]$ be a forcing extended model in \Cref{theo:main4}. Then in $V[G]$, 
    \begin{itemize}
        \item $\rlist{K_{\mu, \mu}}{\kappa}$ holds but $\rlist{K_{\mu, \mu}}{\lambda}$ fails, and
        \item $\slist{K_{\mu, \mu}}{\kappa}$ holds but $\slist{K_{\mu, \mu}}{\lambda}$ fails.
    \end{itemize}
    In other words, in $V[G]$, the monotonicities of $\rlist{K_{\mu, \mu}}{-}$ and $\rlist{K_{\mu, \mu}}{-}$ fail simultaneously.
\end{cor}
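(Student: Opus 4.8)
The plan is to obtain the corollary as a direct consequence of \Cref{theo:main4} together with the basic implication $\rlist{X}{\nu} \to \slist{X}{\nu}$, valid for every graph $X$ and every regular cardinal $\nu$ because $\Stat_\nu \subseteq [\nu]^\nu$ (this was already noted when the five coloring notions were introduced). Fix $V[G]$ as in \Cref{theo:main4}; since $\kappa$ and $\lambda$ are both regular, $\slist{K_{\mu,\mu}}{\kappa}$ and $\slist{K_{\mu,\mu}}{\lambda}$ are meaningful in $V[G]$, so each of the four assertions makes sense.

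First I would read the two endpoints straight off \Cref{theo:main4}: in $V[G]$, $\rlist{K_{\mu,\mu}}{\kappa}$ holds and $\slist{K_{\mu,\mu}}{\lambda}$ fails. Next, for the $\kappa$-part of both bullets I apply the implication forward: from $\rlist{K_{\mu,\mu}}{\kappa}$ we get $\slist{K_{\mu,\mu}}{\kappa}$, so both properties at $\kappa$ hold. For the $\lambda$-part I apply the same implication contrapositively: the failure of $\slist{K_{\mu,\mu}}{\lambda}$ forces the failure of $\rlist{K_{\mu,\mu}}{\lambda}$, so both properties at $\lambda$ fail. Assembling these four facts yields both displayed bullets, and since $\kappa < \lambda$ this shows that in $V[G]$ the pair $\kappa < \lambda$ simultaneously witnesses that neither $\rlist{K_{\mu,\mu}}{-}$ nor $\slist{K_{\mu,\mu}}{-}$ is monotone.

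There is essentially no obstacle: all the content sits in \Cref{theo:main4}. The only points requiring a moment of care are selecting the correct direction of the implication $\rlist{X}{\nu}\to\slist{X}{\nu}$ in each column (forward at $\kappa$, contrapositive at $\lambda$), and checking that the hypotheses invoked are precisely those of \Cref{theo:main4}, namely $\GCH$ in the ground model, $\kappa<\lambda$ regular, $\mu>\lambda$ singular of cofinality $\kappa$, and $\pposet=\Fn_{<\kappa}(\mu,2)$.
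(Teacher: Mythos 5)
Your proposal is correct and matches the paper's argument exactly: the paper likewise derives all four assertions from \Cref{theo:main4} together with the implication $\rlist{X}{\nu}\rightarrow\slist{X}{\nu}$, applied forward at $\kappa$ and contrapositively at $\lambda$. Nothing is missing.
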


\section{Questions}
In \Cref{theo:chr neq slist,theo:slist neq rlist}, we constructed graphs witnessing the fact that $\slist{X}{\kappa}$ is different from $\rlist{X}{\kappa}$ and $\chr{X}{\kappa}$. On the other hand, in \cite{komjath2013list}, Komj{\'a}th proved that $\rlist{X}{\kappa}$ consistently differs from $\lst{X}{\kappa}$. Then the following question arises:

\begin{que}
    Can we prove that $\rlist{X}{\kappa}$ is different from $\lst{X}{\kappa}$ in $\ZFC$?
\end{que}

Next, we do not know whether the assumption of \Cref{theo:main3} is optimal or not. That is,
\begin{que}
    Under $\GCH$, can we prove that $\slist{X}{\kappa}$ implies $\col{X}{\kappa^+}$ without any assumption with respect to regular $\kappa$?
\end{que}

In Section 3, we constructed the model that satisfies $\rlist{X}{\kappa}$ and the negation of $\slist{X}{\lambda}$ for some graph $X$ and regular cardinals $\kappa < \lambda$. In this model, actually, we can prove that $\mathfrak{r}_\lambda$ is singular and $\mathfrak{r}_\lambda < \mathfrak{r}_\kappa$ by a method similar to that of \Cref{theo:main4}. Can we show this anti-monotonicity of reaping numbers by some small forcings? More concretely,

\begin{que}
    Is it consistent that $\mathfrak{r} = \omega_3 > \omega_2 = \mathfrak{r}_{\omega_1}$?
\end{que}

In $\ZFC$, $\mathfrak{r}_\mu \geq \mathfrak{r}_{\cf{\mu}}$ holds for singular $\mu$. Can we separate these invariants?

\begin{que}
    Is it consistent that $\mathfrak{r}_\mu > \mathfrak{r}_{\cf{\mu}}$ for singular $\mu$?
\end{que}

\Cref{prop:characterization of rlist} showed the characterization of the generalized reaping numbers via graph coloring properties. Can we find a similar correspondence for other cardinal invariants? That is,

\begin{que}
    For some cardinal invariant $\mathfrak{x}$ and its generalization $\mathfrak{x}_\kappa$ for every cardinal $\kappa$, does there exist a graph $X_\lambda$ for each cardinal $\lambda$ and a coloring property $P$ such that $P(\kappa, X_\lambda)$ is equivalent to $\mathfrak{x}_\kappa < \lambda$?
\end{que}

Stationary sets are positive sets with respect to the nonstationary ideal. Therefore, in the same way as our stationary list coloring, we can consider $I^+$-list coloring for an arbitrary ideal $I$.

\begin{que}
    What can be said about the corresponding $I^+$-list coloring property for an ideal $I$?
\end{que}

	\printbibliography
\end{document}